\newtheorem{theorem}{Theorem}[section]
\newtheorem{proposition}[theorem]{Proposition}
\newtheorem{lemma}[theorem]{Lemma}
\newtheorem{conjecture}[theorem]{Conjecture}
\theoremstyle{definition}
\newtheorem{definition}[theorem]{Definition}
\newtheorem{remark}[theorem]{Remark}
\newcommand{\PP}{\mathbb{P}}
\newcommand{\QQ}{\mathbb{Q}}
\newcommand{\cO}{\mathcal{O} }
\newcommand{\cA}{\mathcal{A} }
\newcommand{\proj}{\mathrm{Proj}\;}
\def\Mzn{\overline{M}_{0,n} }
\def\Mza{\overline{M}_{0,\cA} }
\def\Mzek{\overline{M}_{0,n\cdot \epsilon_k} }
\def\git{/\!/ }
\begin{document}

\title[Log canonical models for $\Mzn$]
{Log canonical models for the moduli space of pointed stable rational curves}
\date{January, 2011}
\author{Han-Bom Moon}
\address{Department of Mathematics, Seoul National University, Seoul 151-747, Korea}
\email{spring-1@snu.ac.kr}

\begin{abstract}
We run Mori's program for the moduli space of pointed stable rational 
curves with divisor $K +\sum a_{i}\psi_{i}$. 
We prove that, without assuming the F-conjecture,
the birational model for the pair is the Hassett's moduli space of 
weighted pointed stable rational curves, 
without any modification of weight coefficients.
\end{abstract}

\maketitle


\section{Introduction}\label{sec-introduction}
The Knudsen-Mumford space $\Mzn$, or the moduli space of pointed stable
rational curves is one of the most concrete and well-studied moduli spaces in 
algebraic geometry.
For example, it is well-known that $\Mzn$ is a smooth projective fine 
moduli space (\cite{Keel, Knudsen}). 
Also the cohomology ring, the Chow ring
and the Picard group are known (\cite{Keel}).
There are several concrete constructions by using 
explicit methods such as smooth blow-ups (\cite{Kapranov, Keel}) 
or by geometric invariant theory (GIT) as quotients by 
a small dimensional algebraic group (\cite{HuKeel, KiemMoon}). 
Futhermore, there are various different compactifications of 
the space of smooth pointed rational curves 
such as Hassett's moduli spaces of weighted 
pointed stable rational curves $\Mza$ (\cite{Hassett}), 
the GIT quotients of the product of the projective lines (\cite{Kapranov})
and the moduli spaces of pointed conics (\cite{GiansiracusaSimpson}).
All of these are birational models of $\Mzn$.

In spite of these numerous achievements, the birational geometric 
aspects of $\Mzn$ are not fully understood yet.
For instance, the Mori cone $\overline{NE}_{1}(\Mzn)$ (dually, the nef cone 
$Nef(\Mzn)$) is unknown. There is a conjectural description 
of this cone which is proved for $n \le 7$ (\cite{KeelMcKernan}).

\begin{conjecture}[F-conjecture]\label{cnj-F-conjecture}
Any effective curve in $\Mzn$ is numerically equivalent to a
nonnegative linear combination of vital curves. In other words,
every extremal ray of $\overline{NE}_{1}(\Mzn)$ is generated by 
vital curve classes.
\end{conjecture}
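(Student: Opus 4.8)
The plan is first to recast the F-conjecture dually. The vital curves span a rational polyhedral subcone $V_{n} \subseteq \overline{NE}_{1}(\Mzn)$, and since every vital curve is effective the conjecture is equivalent to the reverse inclusion $\overline{NE}_{1}(\Mzn) \subseteq V_{n}$, and hence to $\overline{NE}_{1}(\Mzn) = V_{n}$; dually, it asserts that a divisor $D$ on $\Mzn$ is nef if and only if $D \cdot C \geq 0$ for every vital curve $C$. It therefore suffices to show that the class of every irreducible curve in $\Mzn$ lies in $V_{n}$. I would argue by induction on $n$, taking $n \leq 7$ as known from \cite{KeelMcKernan}, and use the $S_{n}$-action to reduce to controlling curves, and ray generators, only up to permutation of the marked points.

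For the inductive step I would exploit the forgetful morphisms $\pi_{j} \colon \Mzn \to \overline{M}_{0,n-1}$, each of which realizes $\Mzn$ as the universal curve over $\overline{M}_{0,n-1}$. Given an irreducible curve $B \subset \Mzn$, there are two cases. If some $\pi_{j}$ contracts $B$, then $B$ is a component of a fiber of $\pi_{j}$, that is, of a stable rational curve with $n-1$ marked points, and one checks that the classes of such components lie in $V_{n}$. Otherwise $\pi_{j}(B)$ is a curve for every $j$: then I would push $B$ forward, apply the inductive hypothesis to write $(\pi_{j})_{*}B$ as a nonnegative combination of vital classes on $\overline{M}_{0,n-1}$, pull these back along the tautological sections of $\pi_{j}$ --- which carry vital curves to vital curves --- and correct the discrepancy, which lies in the span of fiber components, again by classes in $V_{n}$. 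The delicate point is to pin down how the boundary divisors $\delta_{0,S}$ and the classes $\psi_{i}$ restrict to the sections and fibers of $\pi_{j}$, and this is where the combinatorics becomes heavy.

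A complementary line of attack, following \cite{KeelMcKernan}, is to argue by contradiction. Suppose $R$ is an extremal ray of $\overline{NE}_{1}(\Mzn)$ not lying in $V_{n}$; one first shows $R$ is contractible, so that there is a morphism $\phi_{R}$ contracting precisely the curves with class on $R$. One then argues that $\phi_{R}$ must be birational and must contract a curve that is rigid but not vital, and reaches a contradiction through a deformation argument: a curve meeting the interior $M_{0,n}$, or meeting the boundary in a suitable way, always moves in a nontrivial family, whereas the vital curves are exactly the rigid ones. Combined with an induction over the boundary strata, each a product of smaller spaces $\overline{M}_{0,m}$, this would in principle finish the proof.

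The main obstacle --- and the reason this is a long-standing open problem --- is that neither induction closes in general. Under forgetful push-forward one loses precisely the information needed to control curves in the fiber direction, and the relative cone of curves of the universal curve over $\overline{M}_{0,n-1}$ is no more tractable than the original question; under the contraction approach one is forced, in effect, to classify every birational contraction of $\Mzn$, which is itself unknown. Realistically I expect this plan to deliver the conjecture only for small $n$ --- recovering $n \leq 7$ --- or on the $S_{n}$-symmetric subcone, where the bookkeeping stays manageable; and this limitation is exactly why the present paper is organized so as not to rely on it.
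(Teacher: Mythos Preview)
The paper contains no proof of this statement: it is the F-conjecture, listed as Conjecture~\ref{cnj-F-conjecture} and explicitly noted to be open except for $n \le 7$. Indeed, the whole point of the paper is to establish Theorem~\ref{thm-mainthmintro} \emph{without} assuming it. So there is no ``paper's own proof'' to compare against, and any honest proposal must either restrict to $n \le 7$ (where \cite{KeelMcKernan} already gives the result) or admit incompleteness.

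Your write-up is candid about this: you yourself flag that neither the forgetful-morphism induction nor the Keel--McKernan contraction argument closes in general, and that you expect at best to recover small $n$ or the $S_n$-symmetric statement. That self-assessment is accurate, but it also means what you have submitted is not a proof proposal in any operative sense --- it is a survey of two well-known strategies together with an explanation of why each stalls. The concrete gap in the first approach is exactly where you locate it: after pushing $B$ forward along $\pi_j$ and pulling vital curves back along sections, the ``correction term'' lives in the relative cone of curves of $\pi_j$, and controlling that cone is equivalent to the original problem, so the induction is circular. In the second approach, the step ``one first shows $R$ is contractible'' is itself unproven for general $\Mzn$; one does not know that every extremal ray is $K$-negative or otherwise contractible, and absent that the argument never gets started. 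Since the paper neither claims nor needs a proof here, the appropriate response is simply to state the conjecture and move on, which is what the paper does.
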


Recently, there has been a tremendous amount of interest in the birational 
geometry of $\Mzn$ (\cite{AlexeevSwinarski, Fedorchuk, FedorchukSmyth,
GiansiracusaSimpson, Hassett, HuKeel, Kapranov, Simpson})
and more generally of $\overline{M}_{g,n}$.
In particular, one can run Mori's program (or the minimal model program) 
for $\Mzn$ with a big $\QQ$-divisor $D$ of $\Mzn$, 
by finding a birational model
\begin{equation}\label{eqn-minimalmodel}
	\Mzn(D) := \proj \left(\bigoplus_{l \ge 0}H^{0}(\Mzn, 
	\cO(lD))\right)
\end{equation}
where the sum is taken over $l$ sufficiently 
divisible, and giving a moduli theoretic description to this birational model.

The most prominent two results in this direction are the following.
Set $m = \lfloor \frac{n}{2} \rfloor$. Let $\epsilon_{k}$ be a 
rational number in the range $\frac{1}{m+1-k} < \epsilon_{k} \le 
\frac{1}{m-k}$ for $k = 1, 2, \cdots, m-2$.
For $\epsilon > 0$, let $n\cdot \epsilon = (\epsilon, \cdots, \epsilon)$ 
be a \emph{symmetric} weight datum. 

\begin{theorem}[Simpson \cite{AlexeevSwinarski, 
FedorchukSmyth, KiemMoon, Simpson}]\label{thm-symmetriccase}
Let $\beta$ be a rational number satisfying $\frac{2}{n-1} < \beta
\le 1$ and let $D=\Mzn-M_{0,n}$ denote the total boundary divisor. 
Then the \emph{log canonical model} 
\begin{equation}\label{eqn-lcmodelsymmetric}
	\Mzn(K_{\Mzn}+\beta D) = \proj\left(\bigoplus_
	{l \ge 0} H^0(\Mzn, \cO( l (K_{\Mzn} +\beta D) ))\right)
\end{equation}
satisfies the following:
\begin{enumerate} 
	\item If $\frac{2}{m-k+2} < \beta \le \frac{2}{m-k+1}$ for $1\le k\le
	m-2$, then $\Mzn(K_{\Mzn}+ \beta D) \cong \Mzek$.
	\item  If $\frac{2}{n-1} < \beta \le \frac{2}{m+1}$, 
	then $\Mzn(K_{\Mzn}+\beta D) \cong (\PP^1)^n \git SL(2)$ 
	where the quotient is taken with respect to the symmetric linearization 
	$\cO(1,\cdots,1)$.
\end{enumerate}
\end{theorem}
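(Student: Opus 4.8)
The plan is to realise each asserted model as the image of one explicit birational contraction of $\Mzn$ --- a Hassett reduction morphism $\rho_{k}\colon\Mzn\to\Mzek$ in case (1), and the reduction morphism $\Mzn\to(\PP^{1})^{n}\git SL(2)$ in case (2) --- and to exhibit $K_{\Mzn}+\beta D$, for $\beta$ in the stated interval, as the pull-back of an ample class from that model plus an effective divisor contracted by the morphism. Once this is done, the projection formula together with $\rho_{k*}\cO_{\Mzn}=\cO_{\Mzek}$ (the targets are normal with rational singularities) and the vanishing $\rho_{k*}\cO_{\Mzn}(lE)=\cO_{\Mzek}$ for $E$ effective and $\rho_{k}$-exceptional identify the graded ring in \eqref{eqn-lcmodelsymmetric} with the section ring of an ample class on the target; passing to $\proj$ gives the claimed isomorphism and, in particular, finite generation.

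For the divisor computation I would use the identity $K_{\Mzn}=\psi-2\delta$ with $\psi=\sum_{i=1}^{n}\psi_{i}$ and $\delta=D$ the total boundary, so that $K_{\Mzn}+\beta D=\psi-(2-\beta)\delta$; expanding in the boundary basis,
\begin{equation*}
K_{\Mzn}+\beta D=\sum_{j=2}^{m}\Bigl(\tfrac{j(n-j)}{n-1}-2+\beta\Bigr)\delta_{j},\qquad \delta_{j}:=\sum_{|I|=j}\delta_{I}.
\end{equation*}
Since $j\mapsto j(n-j)$ is increasing on $[2,m]$, every coefficient is strictly positive exactly when $\beta>\tfrac{2}{n-1}$ --- the border case being where $K_{\Mzn}+\beta D$ fails to be big, which is why the hypothesis on $\beta$ is imposed. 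Now fix $k$ with $1\le k\le m-2$ and $\epsilon_{k}\in(\tfrac1{m+1-k},\tfrac1{m-k}]$. Since $\epsilon_{k}\le1$ the reduction morphism $\rho_{k}$ exists (Hassett); its exceptional divisors are precisely the $\delta_{I}$ with $3\le|I|\le m-k$ --- the strata whose bubbled component destabilises and still carries positive-dimensional moduli --- and the $\psi$-classes obey $\rho_{k}^{*}\overline{\psi}_{i}=\psi_{i}-\sum_{I\ni i,\,2\le|I|\le m-k}\delta_{I}$, while the surviving boundary classes pull back with explicit multiplicities. Writing $A_{k}:=\rho_{k*}(K_{\Mzn}+\beta D)$, it remains to prove $(\mathrm i)$ that $K_{\Mzn}+\beta D-\rho_{k}^{*}A_{k}$ is an effective $\QQ$-divisor supported on the $\rho_{k}$-exceptional locus, a combinatorial discrepancy computation once the pull-back formulas are recorded, and $(\mathrm{ii})$ that $A_{k}$ is ample on $\Mzek$ exactly for $\tfrac2{m-k+2}<\beta\le\tfrac2{m-k+1}$. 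It is convenient to organise $(\mathrm i)$ and $(\mathrm{ii})$ as a descending induction along the chain of contractions $\Mzn=\Mzemt\to\Mzemth\to\cdots\to\Mzeo\to(\PP^{1})^{n}\git SL(2)$, whose $k$-th step collapses only the single boundary class $\delta_{m-k+1}$, and at which $K_{\Mzn}+\beta D$ passes from ``pull-back of ample'' to ``pull-back of ample plus a positive multiple of $\delta_{m-k+1}$'' precisely at $\beta=\tfrac2{m-k+1}$. Case (2) is the same argument applied to $\Mzn\to(\PP^{1})^{n}\git SL(2)$, which additionally collapses $\delta_{m}$ and, when $n$ is even, identifies the balanced $|I|=m$ stratum with the strictly semistable point; there the target has Picard number one, the push-forward class is a multiple of the descended ample linearization, and positivity of its single coefficient holds exactly for $\tfrac2{n-1}<\beta\le\tfrac2{m+1}$.

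The genuine obstacle is $(\mathrm{ii})$: proving that $A_{k}$ is actually ample --- not merely nef and big --- \emph{without} invoking Conjecture~\ref{cnj-F-conjecture}. I would extract this from the explicit structure of the targets: $(\PP^{1})^{n}\git SL(2)$ is a classical GIT quotient whose ample polarizations are understood, and every $\Mzek$ is obtained from it by weighted blow-ups along the collision loci, so $A_{k}$ can be matched with a small symmetric perturbation of an $\cO(1,\dots,1)$-type class, and its ampleness reduced, via the Nakai--Moishezon criterion, to the positivity of finitely many intersection numbers against the explicitly described boundary and collision strata and against curves pulled back from GIT quotients --- precisely the sort of bound available without knowing the full Mori cone of $\Mzn$. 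Once this positivity and the effectivity in $(\mathrm i)$ are in place, the remaining work --- pinning down the exact rational coefficients in the discrepancy formula and treating the $n$ even quotient with due care --- is routine.
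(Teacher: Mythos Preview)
Your overall architecture---write $K_{\Mzn}+\beta D$ as the pull-back of a class on the target plus an effective $\rho_k$-exceptional divisor, then identify section rings---is exactly the framework the paper uses. The route, however, is different in two respects.

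First, the paper does not attack the symmetric statement directly. It proves the general result (Theorem~\ref{thm-mainthm}) for an arbitrary weight datum $\cA$, and then in Remark~\ref{rmk-symmetriccase} observes that the identity $\psi=K_{\Mzn}+2D$ gives
\[
K_{\Mzn}+\alpha\psi=(1+\alpha)\Bigl(K_{\Mzn}+\tfrac{2\alpha}{1+\alpha}D\Bigr),
\]
so that the substitution $\beta=\tfrac{2\alpha}{1+\alpha}$ recovers item~(1) of Theorem~\ref{thm-symmetriccase} from the general theorem with $\cA=(\alpha,\dots,\alpha)$; item~(2) is similarly a special case of Theorem~\ref{thm-mainthmboundarycaseintro}. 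Your direct, symmetric-only computation along the chain $\Mzn\to\cdots\to(\PP^1)^n\git SL(2)$ is closer in spirit to the original references \cite{AlexeevSwinarski,FedorchukSmyth,KiemMoon,Simpson} than to this paper.

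Second, and more substantively, your plan for the ampleness step~$(\mathrm{ii})$ diverges from the paper's. You propose to exploit the explicit blow-up description of $\Mzek$ over the GIT quotient and verify Nakai--Moishezon against finitely many strata. The paper instead proves ampleness uniformly for all $\cA$ by an intersection-theoretic factorisation: on any one-parameter family $\pi:S\to B$ of $\cA$-stable curves one has
\[
\varphi_{\cA *}(\Delta_\cA)|_B=\pi_*\Bigl((\omega_\pi+\textstyle\sum a_i\sigma_i)\cdot(2\omega_\pi+\textstyle\sum\sigma_i)\Bigr),
\]
where the first factor is nef by Fedorchuk's positivity (Proposition~\ref{prop-positivity}) and the second is effective (Lemma~\ref{lem-effectivity}); this gives nefness, and a perturbation argument (Proposition~\ref{prop-ample}) upgrades it to ampleness. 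This avoids any case-by-case curve check on the target and is what makes the method work for arbitrary, non-symmetric $\cA$. Your approach is perfectly viable for the symmetric statement, but the passage ``reduce to positivity of finitely many intersection numbers against explicitly described strata'' is precisely where the cited papers do real work, and you should be aware that it is not as routine as your last sentence suggests.
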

The other result concerned \emph{non-symmetric} weights and higher genera
is the following theorem of Fedorchuk. It is an answer to the question of 
Hassett (\cite[Problem 7.1]{Hassett}).

\begin{theorem}\label{thm-fedorchuk}
\cite{Fedorchuk}
For every genus $g$ and weight datum $\cA$, there exists a 
\emph{log canonical divisor} 
$D_{g, \cA}$ on $\overline{M}_{g,n}$ such that the log canonical 
model $\overline{M}_{g, n}(K_{\overline{M}_{g,n}}+D_{g,\cA})$ is 
isomorphic to $\overline{M}_{g,\cA}$.
\end{theorem}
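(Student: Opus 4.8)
The plan is to realize $\overline{M}_{g,\cA}$ as the target of a contraction of $\overline{M}_{g,n}$ and then to pull back an ample class. First I would invoke Hassett's reduction morphism $\rho_{\cA}\colon\overline{M}_{g,n}\to\overline{M}_{g,\cA}$, obtained by running the $\cA$-stabilization on the universal family over $\overline{M}_{g,n}$: it is surjective and birational (the identity on the common interior $M_{g,n}$) and factors as a chain of elementary reductions $\overline{M}_{g,\cA'}\to\overline{M}_{g,\cA''}$, each lowering one block of weights past an integer and contracting a single type of boundary divisor, namely the locus of curves carrying a rational tail that has become unstable for the smaller weights. Since $\overline{M}_{g,\cA}$ is normal and $\rho_{\cA}$ is proper and birational, $(\rho_{\cA})_{*}\cO_{\overline{M}_{g,n}}=\cO_{\overline{M}_{g,\cA}}$.

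Next I would fix an ample $\QQ$-divisor $A_{\cA}$ on $\overline{M}_{g,\cA}$ with a clean tautological description --- for instance the weighted $\kappa$-class $A_{\cA}=\pi_{*}\big(c_{1}(\omega_{\pi}(\textstyle\sum_i a_{i}\sigma_{i}))^{2}\big)$ attached to the universal curve $\pi\colon\mathcal{C}_{\cA}\to\overline{M}_{g,\cA}$ with its weighted sections, whose ampleness is essentially the input to Hassett's projectivity proof --- and then \emph{define} $D_{g,\cA}:=\rho_{\cA}^{*}A_{\cA}-K_{\overline{M}_{g,n}}$. The first thing to check is that $D_{g,\cA}$ is effective: expanding $\rho_{\cA}^{*}A_{\cA}$ in tautological classes via $\rho_{\cA}^{*}\lambda=\lambda$ and the standard pullback formulas for $\psi$- and boundary classes under $\rho_{\cA}$, and comparing with the canonical class formula $K_{\overline{M}_{g,n}}=13\lambda+\sum_{i}\psi_{i}-2\delta$, one should land on an expression $D_{g,\cA}=\sum_{i}a_{i}\psi_{i}+(\text{a boundary divisor with all coefficients in }[0,1])+(\text{a nonnegative multiple of }\lambda)$, which is effective because $0<a_{i}\le 1$; in genus $0$ the $\lambda$-term disappears and $D_{g,\cA}$ is supported on $\sum_i\psi_i+\delta$.

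Then I would verify that $(\overline{M}_{g,n},D_{g,\cA})$ is log canonical, which is what makes $K_{\overline{M}_{g,n}}+D_{g,\cA}$ a genuine log canonical divisor. Working on the smooth stack $\overline{\mathcal{M}}_{g,n}$, whose boundary is normal crossings and on which the $\psi_i$ can be represented by divisors meeting the boundary transversally, the support of $D_{g,\cA}$ has simple normal crossings and all coefficients in $[0,1]$, so the pair is log canonical upstairs; passing to the coarse space only raises discrepancies because $\overline{M}_{g,n}$ has finite quotient singularities. The delicate point is the behaviour along the loci contracted by $\rho_{\cA}$, where $D_{g,\cA}$ meets the contracted divisor with coefficient exactly $1$ --- consistent with the discrepancy $-1$ forced by $\rho_{\cA}^{*}A_{\cA}=K_{\overline{M}_{g,n}}+D_{g,\cA}$ and the ampleness of $A_{\cA}$ --- so log canonicity survives.

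Finally, since $A_{\cA}$ is ample and $(\rho_{\cA})_{*}\cO_{\overline{M}_{g,n}}=\cO_{\overline{M}_{g,\cA}}$, the projection formula gives
\[
  H^{0}\big(\overline{M}_{g,n},\cO(l(K_{\overline{M}_{g,n}}+D_{g,\cA}))\big)\;\cong\;H^{0}\big(\overline{M}_{g,\cA},\cO(lA_{\cA})\big)
\]
for all sufficiently divisible $l$, so the section ring of $K_{\overline{M}_{g,n}}+D_{g,\cA}$ is the ample section ring of $A_{\cA}$ and hence $\overline{M}_{g,n}(K_{\overline{M}_{g,n}}+D_{g,\cA})=\proj\bigoplus_{l}H^{0}(\overline{M}_{g,\cA},\cO(lA_{\cA}))\cong\overline{M}_{g,\cA}$. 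The hard part will be the explicit bookkeeping in the middle step: choosing (and proving ample) the class $A_{\cA}$, computing $\rho_{\cA}^{*}A_{\cA}$ exactly by tracking how each elementary reduction alters $\psi$- and boundary classes, and checking simultaneously that every boundary coefficient of the resulting $D_{g,\cA}$ lies in $[0,1]$ --- this one inequality yields both effectiveness and the upper bound needed for log canonicity. In genus $0$, where $\lambda=0$ and $\overline{M}_{0,n}$ is smooth, this collapses to a finite combinatorial check on which boundary divisors are contracted and with what coefficient, which is the refinement carried out in the rest of this paper.
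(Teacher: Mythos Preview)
This theorem is not proved in the present paper at all: it is quoted from \cite{Fedorchuk} as background and motivation, and the paper supplies no argument for it. So there is no ``paper's own proof'' to compare your proposal against. What the paper does prove is the genus~$0$ refinement (Theorem~\ref{thm-mainthm}), and its method differs from yours in a structurally important way.

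Your plan is to start on $\overline{M}_{g,\cA}$ with a known ample class $A_{\cA}$, pull it back, and \emph{define} $D_{g,\cA}:=\rho_{\cA}^{*}A_{\cA}-K_{\overline{M}_{g,n}}$; the work then lies in checking that the resulting boundary coefficients land in $[0,1]$. The paper (and, as far as one can infer from its use of \cite[Proposition~2.1]{Fedorchuk}, also Fedorchuk) runs the argument in the opposite direction: one begins with an explicit divisor on $\overline{M}_{g,n}$ (here $\Delta_{\cA}=K+\sum a_{i}\psi_{i}$), pushes it forward to $\overline{M}_{g,\cA}$, checks that $\Delta_{\cA}-\varphi_{\cA}^{*}\varphi_{\cA *}\Delta_{\cA}$ is effective and $\varphi_{\cA}$-exceptional, and then proves \emph{ampleness} of $\varphi_{\cA *}\Delta_{\cA}$ from scratch via the positivity result (Proposition~\ref{prop-positivity}), the factorization in \eqref{eqn-neftimeseffective}, and a perturbation argument (Proposition~\ref{prop-ample}). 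Your route buys simplicity if an ample class on $\overline{M}_{g,\cA}$ is already in hand, but it hides the content in the coefficient check and in the assertion that your chosen $A_{\cA}$ is ample --- which is precisely the nontrivial input the paper works to establish in the genus~$0$ case. In short: your outline is a reasonable sketch of the Fedorchuk-style argument, but be aware that the ampleness of the weighted class and the exact coefficient bounds are the substance, not formalities, and that the present paper neither proves nor attempts the general-genus statement you are addressing.
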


Very recently, in \cite{Moon1}, the author finds an \emph{universal} formula
generalizing Theorem \ref{thm-symmetriccase}
to non-symmetric weights $\cA = (a_{1},a_{2},\cdots, a_{n})$,
and proves it with assuming the F-conjecture.

\begin{theorem}\label{thm-mainthmintro}
Let $\cA = (a_{1},a_{2},\cdots, a_{n})$ be a weight datum.
Then the log canonical model 
$\overline{M}_{0,n}(K_{\Mzn}+ \sum_{i=1}^{n} a_{i}\psi_{i})$ 
is isomorphic to $\Mza$.
\end{theorem}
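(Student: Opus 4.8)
The plan is to factor everything through Hassett's reduction morphism, pull the given divisor back to an explicit combination downstairs plus an effective exceptional term, and then certify positivity of the downstairs divisor by a direct geometric argument — which is precisely the step that in \cite{Moon1} required the F-conjecture. Write $\rho := \rho_\cA\colon\Mzn\to\Mza$ for Hassett's reduction morphism (\cite{Hassett}); it is a birational morphism of smooth projective varieties contracting exactly the boundary divisors $\delta_{0,S}$ with $|S|\ge 3$ and $\sum_{i\in S}a_i\le 1$, each such divisor being carried onto a locus of codimension $|S|-1\ge 2$ in $\Mza$. Set $D_\cA := K_{\Mzn}+\sum_{i=1}^{n}a_i\psi_i$. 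The first step is to prove a decomposition
\begin{equation}\label{eqn-pullback}
  D_\cA \;=\; \rho^{*}L_\cA + E,\qquad
  L_\cA := K_{\Mza}+\sum_{i=1}^{n}a_i\psi_i^{\cA},\qquad
  E \;=\; \sum_{S\ \text{contracted}} c_S\,\delta_{0,S},
\end{equation}
with $E$ effective (in fact $c_S>0$) and supported on the $\rho$-contracted divisors, where $\psi_i^{\cA}$ is the $i$-th cotangent class on $\Mza$. One obtains this by comparing $\psi$-classes and canonical classes under $\rho$ — the weighted analogue of Keel's relations, together with the discrepancy of $\rho$ read off from its fiberwise contraction structure — and checking that the coefficient of each contracted $\delta_{0,S}$ comes out as a positive combination of $|S|-3\ge 0$ and $\sum_{i\in S}a_i$, while the non-contracted boundary divisors cancel out. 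Since $L_\cA=\rho_{*}D_\cA$, \eqref{eqn-pullback} also pins down $L_\cA$ as the candidate polarization on $\Mza$.

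Granting \eqref{eqn-pullback}, the theorem follows from the ampleness of $L_\cA$ by a standard argument. Because $E$ is effective, $\rho$-exceptional, and $\Mza$ is normal, we have $\rho_{*}\cO_{\Mzn}(mE)=\cO_{\Mza}$ for every $m$ with $mE$ integral: a rational function with poles bounded by $mE$ is regular outside the codimension-$\ge 2$ image of $\mathrm{Supp}(E)$, hence regular everywhere by normality. By the projection formula $H^{0}(\Mzn,mD_\cA)=H^{0}(\Mza,mL_\cA)$ for all such $m$, so
$\Mzn(D_\cA)\cong\proj\bigl(\bigoplus_{m\ge 0}H^{0}(\Mza,mL_\cA)\bigr)$,
and this equals $\Mza$ itself precisely when $L_\cA$ is ample. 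Thus the entire problem reduces to the assertion that $L_\cA=K_{\Mza}+\sum_{i=1}^{n}a_i\psi_i^{\cA}$ is ample on $\Mza$.

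This is where the F-conjecture is circumvented. Bigness of $L_\cA$ follows readily from the weight-datum condition $\sum a_i>2$. For the positivity itself I would argue as follows. Let $\pi\colon\mathcal C\to\Mza$ be the universal curve with sections $\sigma_i$; weighted stability says exactly that $\mathcal L:=\omega_\pi(\sum a_i\sigma_i)$ is $\pi$-ample. A Grothendieck--Riemann--Roch computation expresses $L_\cA$ as a positive rational multiple of $\pi_{*}(c_1(\mathcal L)^{2})$ modulo an effective boundary class; since the push-forward of the square of a $\pi$-nef class is nef — the Fulton-type positivity used in Fedorchuk's proof of Theorem \ref{thm-fedorchuk} — this gives that $L_\cA$ is nef. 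To upgrade nefness to ampleness I would induct on $n$: each boundary stratum of $\Mza$ is a product of Hassett spaces with fewer markings, on which $L_\cA$ restricts, after adjunction, to the corresponding ample class by the inductive hypothesis; together with bigness this confines any $L_\cA$-trivial curve to a locus contracted by some reduction morphism out of $\Mza$, and the explicit classification of those contractions shows there is no such curve beyond what \eqref{eqn-pullback} already accounts for. The base case is the small-weight chamber, where $\Mza\cong(\PP^1)^{n}\git SL(2)$ and $L_\cA$ is a positive rational multiple of the descent of the polarization $\cO(a_1,\ldots,a_n)$, hence ample — consistently with Theorem \ref{thm-symmetriccase}(2).

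The heart of the matter, and the main obstacle, is this last ampleness step carried out without the F-conjecture. Nefness from push-forward positivity is comparatively soft; the difficulty is that here the coefficients are the \emph{unmodified} weights $a_i$, so $L_\cA$ lies exactly on the boundary of the range in which such crude positivity is visible, and excluding curves $C$ with $L_\cA\cdot C=0$ demands the full inductive control of the boundary strata together with the precise description of the contractions of $\Mza$ — the input that \cite{Moon1} obtained instead from the F-conjecture. Once that is in place, \eqref{eqn-pullback} and the section-ring computation finish the proof; the remaining work — the exact values of the $c_S$, and the treatment of the intermediate Hassett chambers interpolating between $\Mzn$ and $\Mza$ — is routine bookkeeping.
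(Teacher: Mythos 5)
Your reduction step is where the proof breaks down. The decomposition $D_{\cA}=\rho^{*}L_{\cA}+E$ with $L_{\cA}=K_{\Mza}+\sum_{i=1}^{n}a_{i}\psi_{i}$ and $E$ effective and supported on the $\rho$-contracted divisors is false, as is the identification $L_{\cA}=\rho_{*}D_{\cA}$ (here $\rho=\varphi_{\cA}$ is the reduction morphism). Using $\varphi_{\cA}^{*}(\psi_{i})=\psi_{i}-\sum_{i\in I,\,w_{I}\le 1}D_{I}$ --- note that this sum includes the two-element sets $I$ --- together with the discrepancy $|I|-2$ along each contracted divisor, one finds
\[
 D_{\cA}-\varphi_{\cA}^{*}L_{\cA}
 =\sum_{\substack{|I|\ge 3\\ w_{I}\le 1}}\bigl(|I|-2+w_{I}\bigr)D_{I}
 \;+\;\sum_{\substack{i<j\\ a_{i}+a_{j}\le 1}}(a_{i}+a_{j})D_{\{i,j\}}.
\]
The divisors $D_{\{i,j\}}$ with $a_{i}+a_{j}\le 1$ are \emph{not} contracted by $\varphi_{\cA}$: they dominate the coincident-section boundary of $\Mza$. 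So the non-contracted boundary does not cancel, $E$ is not exceptional, and the projection-formula step $H^{0}(\Mzn,mD_{\cA})=H^{0}(\Mza,mL_{\cA})$ is unjustified. The correct push-forward is $\varphi_{\cA *}(D_{\cA})=K_{\Mza}+\sum_{i}a_{i}\psi_{i}+\sum_{a_{i}+a_{j}\le 1}(a_{i}+a_{j})D_{\{i,j\}}$, which differs from your $L_{\cA}$ by a nonzero effective, non-exceptional divisor as soon as some pair of weights satisfies $a_{i}+a_{j}\le 1$; your argument therefore reduces the theorem to the ampleness of the wrong divisor.

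This is not mere bookkeeping: the terms $(a_{i}+a_{j})D_{\{i,j\}}$ are precisely what make positivity work along the loci of coincident sections. In the correct argument, a curve lying in the locus where the sections indexed by $J$ coincide is handled by pulling back through the replacement morphism $\chi_{J}$, and those boundary terms combine with $\psi_{p}$ into Fedorchuk's nef class $C_{p}$; with the bare $K_{\Mza}+\sum a_{i}\psi_{i}$ that contribution is absent, and positivity on such loci is not visible (nefness of individual $\psi$-classes on $\Mza$ cannot be taken for granted). Your nefness sketch also misses the actual mechanism: what is used is the fiberwise factorization $\varphi_{\cA *}(D_{\cA})=\pi_{*}\bigl((\omega+\sum a_{i}\sigma_{i})\cdot(2\omega+\sum\sigma_{i})\bigr)$, a product of a class that is nef on the total surface (Fedorchuk's positivity) with a class that must be \emph{proved} effective, rather than ``a positive multiple of $\pi_{*}(c_{1}^{2})$ modulo an effective boundary class,'' which by itself gives no sign control on curves inside the boundary. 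Finally, nef plus big plus ample on boundary strata does not imply ample: one must exclude trivial curves whose general member is a smooth pointed curve, and this is done by perturbing the weights $\cA\mapsto\cA-\delta$ and using that $\varphi_{\cA *}(\psi)$ is a positive combination of \emph{all} boundary divisors, which produces a uniform positive lower bound stratum by stratum. Without the correct divisor and these steps, the F-conjecture has not actually been bypassed.
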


The aim of this paper is proving Theorem \ref{thm-mainthmintro}
\emph{without assuming the F-conjecture}.

\medskip

Here is an outline of the proof.
Let $\Delta_{\cA} = K_{\Mzn}+\sum_{i=1}^{n}a_{i}\psi_{i}$.
Let $\varphi_{\cA} : \Mzn \to \Mza$ be the reduction morphism. 
By computing the push-forwards and pull-backs of divisors 
(See Section \ref{sec-divisors}.),
we prove that 
$\Delta_{\cA}-\varphi_{\cA}^{*}\varphi_{\cA *}(\Delta_{\cA})$
is an effective divisor supported on the exceptional locus of $\varphi_{\cA}$.
Thus
\[
	H^{0}(\Mzn, \cO(\Delta_{\cA})) \cong 
	H^{0}(\Mza, \cO(\varphi_{\cA *}(\Delta_{\cA})))
\]
by \cite[Lemma 7.11]{Debarre}. 
Hence if we prove that $\varphi_{\cA *}(\Delta_{\cA})$ is ample on $\Mza$, 
then we have
\begin{eqnarray*}
	\Mzn(\Delta_{\cA}) &=& \proj \left(\bigoplus_{l \ge 0}
	H^{0}(\Mzn, \cO(l\Delta_{\cA}))\right)\\
	&\cong& \proj \left(\bigoplus_{l \ge 0}
	H^{0}(\Mza, \cO(l \varphi_{\cA *}(\Delta_{\cA}))) \right)
	\cong \Mza.
\end{eqnarray*}

For proving the ampleness of $\varphi_{\cA *}(\Delta_{\cA})$,
we follow the strategy of Fedorchuk in \cite{Fedorchuk}. 
Firstly, we can express $\varphi_{\cA *}(\Delta_{\cA})$
in terms of tautological divisors on $\Mza$.
Then by using a positivity result of Fedorchuk (Proposition \ref{prop-positivity})
and the induction on the dimension, 
we prove that $\varphi_{\cA *}(\Delta_{\cA})$ 
intersects non-negatively with all irreducible curves on $\Mza$, so is nef.
Moreover, we proved that small perturbations of 
$\varphi_{\cA *}(\Delta_{\cA})$ by boundary divisors are again nef.
Since the Neron-Severi vector space $N^{1}(\Mza)$ is generated by the 
boundary divisor classes, this implies that $\varphi_{\cA *}(\Delta_{\cA})$ 
lying on the interior of $Nef(\Mza)$, so it is ample by Kleiman's 
criterion.

\medskip

For boundary weight cases, Kapranov's morphism 
$\pi_{\cA} : \Mzn \to (\PP^{1})^{n}\git SL(2)$ 
plays the same role of the reduction morphism. 
By the similar strategy, we prove the following theorem in \cite{Moon1}.

\begin{theorem}\cite{Moon1}
\label{thm-mainthmboundarycaseintro}
Let $\cA = (a_{1}, \cdots, a_{n})$ be a boundary weight data, 
i.e., $\sum_{i=1}^{n} a_{i} = 2$. 
Then the log canonical model 
$\Mzn(K_{\Mzn}+\sum_{i=1}^{n}a_{i}\psi_{i})$ is
isomorphic to $(\PP^{1})^{n} \git_{L}  SL(2)$.
where $L$ is the linearization $\cO(a_{1},\cdots, a_{n})$.
\end{theorem}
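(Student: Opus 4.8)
The plan is to run the argument used for Theorem~\ref{thm-mainthmintro} almost verbatim, with Kapranov's morphism
\[
	\pi_{\cA} \colon \Mzn \longrightarrow Q := (\PP^{1})^{n}\git_{L} SL(2),
	\qquad L = \cO(a_{1},\cdots,a_{n}),
\]
(whose existence is known from \cite{Kapranov, Hassett, HuKeel, KiemMoon}) taking the place of the reduction morphism $\varphi_{\cA}$. Write $\Delta_{\cA} = K_{\Mzn}+\sum_{i=1}^{n}a_{i}\psi_{i}$. As before, it suffices to prove two things: (i) $\Delta_{\cA} - \pi_{\cA}^{*}\pi_{\cA *}(\Delta_{\cA})$ is an effective $\QQ$-divisor supported on the exceptional locus of $\pi_{\cA}$, and (ii) $\pi_{\cA *}(\Delta_{\cA})$ is ample on $Q$. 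Granting these, \cite[Lemma~7.11]{Debarre} gives $H^{0}(\Mzn, \cO(l\Delta_{\cA})) \cong H^{0}(Q, \cO(l\,\pi_{\cA *}(\Delta_{\cA})))$ for all sufficiently divisible $l\ge 0$, so that $\Mzn(\Delta_{\cA}) \cong \proj\big(\bigoplus_{l\ge 0} H^{0}(Q, \cO(l\,\pi_{\cA *}(\Delta_{\cA})))\big) \cong Q$, the last isomorphism holding by (ii).

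For (i), I would first record the standard expression of $K_{\Mzn}$ in terms of $\psi$-classes and boundary divisors, together with the description of which boundary divisors of $\Mzn$ are $\pi_{\cA}$-exceptional and of the induced maps on Picard and N\'eron--Severi groups; this is the content parallel to Section~\ref{sec-divisors}. Pushing $\Delta_{\cA}$ forward and pulling back, the decisive point is that the normalization $\sum_{i}a_{i}=2$ is exactly what forces the $\psi$-coefficients of $\Delta_{\cA}$ and of $\pi_{\cA}^{*}\pi_{\cA *}(\Delta_{\cA})$ to agree, so that their difference is a combination of $\pi_{\cA}$-exceptional boundary divisors; a coefficient bookkeeping (as in Fedorchuk's computation in \cite{Fedorchuk}) then shows these coefficients are nonnegative.

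For (ii), the strategy is again: express $\pi_{\cA *}(\Delta_{\cA})$ in terms of the ample class on $Q$ descended from $L$ and the images of the boundary divisors, and then use Fedorchuk's positivity estimate (Proposition~\ref{prop-positivity}) together with induction on $n = \dim Q + 3$, applied to the (lower-dimensional) boundary strata of $Q$, to show that $\pi_{\cA *}(\Delta_{\cA})$ meets every irreducible curve of $Q$ nonnegatively, hence is nef, and that the same remains true after a small perturbation by each boundary class. Since $N^{1}(Q)_{\QQ}$ is spanned by the images of the boundary divisors, $\pi_{\cA *}(\Delta_{\cA})$ then lies in the interior of $Nef(Q)$ and is ample by Kleiman's criterion; combined with the section-ring comparison above this yields $\Mzn(\Delta_{\cA}) \cong Q$.

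The hard part is (ii). Unlike $\Mza$, the target $Q$ has worse (though still finite quotient) singularities, and when some proper subset of the weights sums exactly to $1$ it carries strictly semistable points, so $Q$ is a genuine GIT quotient and not a Hassett space; one must therefore analyze $\pi_{\cA}$, its exceptional locus, and the boundary strata of $Q$ carefully over this semistable locus. This is also the place where the dimensional induction feeds on itself and where Fedorchuk's positivity proposition does the real work; the base cases of small $n$, where $Q$ is a point, $\PP^{1}$, or a del Pezzo surface, should be checked directly.
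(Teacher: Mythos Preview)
The paper does not actually prove Theorem~\ref{thm-mainthmboundarycaseintro}; it is quoted from \cite{Moon1}, and the only indication of the argument is the sentence preceding the statement: ``For boundary weight cases, Kapranov's morphism $\pi_{\cA} : \Mzn \to (\PP^{1})^{n}\git SL(2)$ plays the same role of the reduction morphism. By the similar strategy, we prove the following theorem in \cite{Moon1}.'' Your proposal is precisely this ``similar strategy'': replace $\varphi_{\cA}$ by $\pi_{\cA}$, check that $\Delta_{\cA}-\pi_{\cA}^{*}\pi_{\cA *}(\Delta_{\cA})$ is effective and $\pi_{\cA}$-exceptional, and then prove $\pi_{\cA *}(\Delta_{\cA})$ is ample on the GIT quotient. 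So at the level of outline your approach agrees with what the paper indicates.

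One remark on step~(ii): you propose to reprove ampleness on $Q=(\PP^{1})^{n}\git_{L}SL(2)$ by the Fedorchuk positivity/perturbation machinery used here for $\Mza$, and you correctly flag the strictly semistable locus as the delicate point. It is worth noting, however, that this step is in many respects \emph{easier} than the $\Mza$ case rather than harder. When no proper subset of the $a_{i}$ sums to $1$ the quotient $Q$ has Picard number one, so ampleness of $\pi_{\cA *}(\Delta_{\cA})$ reduces to checking that it is a positive multiple of the descended polarization; in general the Picard group of $Q$ is still very small, and one can identify $\pi_{\cA *}(\Delta_{\cA})$ directly with (a positive multiple of) the natural ample class coming from $L$, bypassing the inductive curve-by-curve argument you sketch. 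Either route works, but the direct identification is what makes the boundary case tractable already in \cite{Moon1}, without the F-conjecture and without the full strength of the present paper's techniques.
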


By direct computation, it is easy to see that 
item (1) of Theorem \ref{thm-symmetriccase} 
is a special case of Theorem \ref{thm-mainthmintro} (see Remark 
\ref{rmk-symmetriccase}).
Also, we can regard item (2) of Theorem \ref{thm-symmetriccase} 
as a special case of Theorem \ref{thm-mainthmboundarycaseintro}.

It is well known that the dimension of the Neron-Severi vector space 
$N^{1}(\Mzn)$
is $2^{n-1}-{n \choose 2} -1$ (\cite{Keel}).
But by Theorem \ref{thm-mainthmintro}, to get Hassett's spaces $\Mza$,
it suffices to run Mori's program for an $n$-dimensional subcone of 
$N^{1}(\Mzn)$ only.
So we can guess that there are huge unknown families of birational models of 
$\Mzn$ other than Hassett's moduli spaces.

\medskip

This paper is organized as follows. In section \ref{sec-prelimilary},
we give some known facts about $\Mza$ and its divisor classes. 
Essentially there is no new result in this section. 
In section \ref{sec-proof}, we give a proof of Theorem 
\ref{thm-mainthmintro}.

\medskip

\textbf{Acknowledgement. }
It is a great pleasure to thank my advisor Young-Hoon Kiem.
Originally finding an universal formula for log canonical models of $\Mzn$ 
is a question raised by him. Without his patience and advice, it is impossible 
to finish this project.
I would also like to thank Maksym Fedorchuck 
for invaluable discussions and comments.


\section{Some Preliminaries}\label{sec-prelimilary}

\subsection{Moduli space of weighted pointed rational stable curves}
\label{sec-Hassettspace}
A \emph{weight datum} $\cA = (a_{1},a_{2},\cdots, a_{n})$ is a sequence of 
rational numbers such that $0 < a_{i} \le 1$ and $\sum_{i=1}^{n} a_{i} > 2$.
A family of rational curves with $n$ marked points over a base scheme $B$ 
consists of a flat proper morphism $\pi : C \to B$ 
whose geometric fibers are nodal connected rational curves, 
and $n$ sections $s_{1},s_{2},\cdots, s_{n}$ of $\pi$.

\begin{definition}\cite[Section 2]{Hassett}
A family of rational curves with $n$ marked points 
$\pi : (C, s_{1},\cdots,s_{n}) \to B$ is \emph{$\cA$-stable} if
\begin{enumerate}
	\item the sections $s_{1},\cdots, s_{n}$ lie in the smooth locus of $\pi$;
	\item for any subset $\{s_{i_{1}},\cdots, s_{i_{r}}\}$ of 
	nonempty intersection, $a_{i_{1}}+\cdots+a_{i_{r}}\le 1$;
	\item $\omega_{\pi}+\sum a_{i}s_{i}$ is $\pi$-ample.
\end{enumerate}
\end{definition}
For any weight data $\cA$, there exists a connected projective smooth 
moduli space $\Mza$ (\cite[Theorem 2.1]{Hassett}). 
Note that when $a_{1} = \cdots = a_{n} = 1$, $\Mza = \Mzn$.

Let $\cA = (a_{1}, \cdots, a_{n})$, 
$\mathcal{B} = (b_{1}, \cdots, b_{n})$ be two weight data and suppose that 
$a_{i} \ge b_{i}$ for all $i = 1, 2, \cdots, n$. 
Then there exists a birational \emph{reduction morphism}
(\cite[Theorem 4.1]{Hassett}) 
\[
	\varphi_{\cA,\mathcal{B}} : \Mza \to \overline{M}_{0, \mathcal{B}}.
\]
For $(C, s_{1},\cdots, s_{n}) \in \Mza$, $\varphi_{\cA, \mathcal{B}}
(C, s_{1},\cdots, s_{n})$ is obtained by collapsing components on which 
$\omega_{C} + \sum b_{i}s_{i}$ fails to be ample. Every reduction 
morphism is a divisorial contraction.

In this article, we use reduction morphisms from $\Mzn$ only. 
So we use more concise notation 
\[
	\varphi_{\cA} := \varphi_{(1, \cdots, 1), \cA} : \Mzn \to \Mza.
\]
It is a composition of blow-ups along smooth subvarieties 
(\cite{KiemMoon, Moon2}).


\subsection{Tautological divisors on $\Mza$}\label{sec-divisors}
In this section, we recall some information about several functorial divisors 
on $\Mza$. Most of results in this section are spread on 
many literatures for instance 
\cite{ArbarelloCornalba1, ArbarelloCornalba2, Fedorchuk, FedorchukSmyth,
HarrisMorrison, Hassett}. Some of results are simple generalization or 
modification of them.

Let $[n] = \{1,2,\cdots,n\}$. For $I \subset [n]$ such that 
$2 \le |I| \le n-2$, let $D_{I} \subset \Mzn$ be the closure of the locus of 
curves $C$ with two irreducible components $C_{I}, C_{I^{c}}$ such that 
$i$-th marked point lying on $C_{I}$ if and only if $i \in I$.
So $D_{I} = D_{I^{c}}$. These divisors are called \emph{boundary divisors}.
By \cite{Keel}, boundary divisors generate the Picard group 
$\mathrm{Pic}(\Mzn)$ and Neron-Severi vector space $N^{1}(\Mzn)$.

Let $\cA = (a_{1}, a_{2}, \cdots, a_{n})$ be a weight datum. 
For $I \subset [n]$, let $w_{I} := \sum_{i \in I}a_{i}$.
There are two kinds of boundary divisor classes in $\Mza$ for a general 
weight datum $\cA$.
\begin{enumerate}
	\item \emph{Boundary of nodal curves}: 
	Suppose that $w_{I^{c}} \ge w_{I} > 1$.
	Let $D_{I}$ be the divisor of $\Mza$ corresponding 
	the closure of the locus of curves with two irreducible components 
	$C_{I}, C_{I^{c}}$ and $s_{i} \in C_{I}$ if and only if $i \in I$.
	Let $D_{\mathrm{nod}}$ be the sum of all boundaries of 
	nodal curves.
	\item \emph{Boundary of curves with coincident sections}:
	Suppose that $I = \{i, j\}$ and $w_{I} \le 1$.
	Since $w = w_{[n]} > 2$, this implies $w_{I^{c}} > w_{I}$
	automatically.
	Let $D_{I}$ be the locus of $s_{i} = s_{j}$.
	Let $D_{\mathrm{sec}}$ be the sum of all boundaries of 
	curves with coincident sections.
\end{enumerate}
Since the reduction morphism $\varphi_{\cA}$ is a composition of 
smooth blow-ups, 
one can easily derive following push-forward and pull-back formulas 
for divisor classes.
  
\begin{lemma}\label{lem-pushforwardpullback}
Let $\varphi_{\cA} : \Mzn \to \Mza$ be the reduction morphism.
For $I \subset [n]$, let $w_{I} = \sum_{i \in I}a_{i}$.
Assume $w_{I} \le w_{I^{c}}$ for every $D_{I}$.
\begin{enumerate}
	\item $\varphi_{\cA *}(D_{I}) = \begin{cases}
	0, & |I| \ge 3 \mbox{ and } w_{i}\le 1\\
	D_{I},& \mbox{otherwise.}\end{cases}$
	\item $\varphi_{\cA}^{*}(D_{I}) = \begin{cases}
	D_{I} + \sum_{J \supset I, w_{J} \le 1}D_{J},
	& D_{I} \mbox{ is a boundary of curves with coincident sections}\\
	D_{I}, &\mbox{otherwise.}\end{cases}$
\end{enumerate}
\end{lemma}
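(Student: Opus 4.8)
The plan is to verify both formulas by working through the explicit description of $\varphi_{\cA}$ as a composition of smooth blow-ups, tracking how each blow-up affects the relevant divisor classes. Since $\varphi_{\cA} = \varphi_{(1,\dots,1),\cA}$ factors through a chain of intermediate Hassett spaces corresponding to decreasing weight data, it suffices to understand a single elementary reduction step and then compose.

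\smallskip

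First I would recall the precise geometry of the reduction morphism. Given two weight data with $a_i \ge b_i$, the morphism $\varphi_{\cA,\cB}$ collapses exactly those components on which $\omega_C + \sum b_i s_i$ fails to be ample; in the relevant elementary steps this means contracting a $\PP^1$-tail carrying a set of sections $\{s_i : i \in I\}$ with small total weight down to a single point where those sections now coincide. Concretely, crossing the wall at which $w_I$ drops to $1$ replaces the divisor $D_I$ (a family of nodal curves with a tail indexed by $I$) by the locus where the sections indexed by $I$ all pass through one point. For part (1): if $|I| \ge 3$ and $w_I \le 1$, then on $\Mza$ the locus $D_I$ has been contracted — the generic curve in $D_I \subset \Mzn$ has its $I$-tail collapsed, and the image has codimension $\ge 2$ (the sections in $I$ become coincident, which is $|I|-1 \ge 2$ conditions), so $\varphi_{\cA *}(D_I) = 0$. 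In every other case ($|I| = 2$, or $w_I > 1$ — equivalently $D_I$ survives as an honest boundary divisor of $\Mza$, either of nodal type or of coincident-section type), the generic point of $D_I$ is not in the exceptional locus, $\varphi_{\cA}$ is birational, so $\varphi_{\cA *}(D_I) = D_I$.

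\smallskip

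For part (2), I would compute $\varphi_{\cA}^*(D_I)$ by intersecting with test curves or, more cleanly, by the standard fact that for a blow-up the pullback of a divisor is its strict transform plus a combination of the exceptional divisors with multiplicities equal to the order of vanishing along each blown-up center. When $D_I$ is a boundary of nodal curves (so $w_I > 1$, $w_{I^c} > 1$), the center of each blow-up in the chain is disjoint from $D_I$ or $D_I$ contains the center transversally with multiplicity zero, so $\varphi_{\cA}^*(D_I) = D_I$; the same holds trivially for any $D_I$ that is not itself a boundary divisor of $\Mza$. When $D_I = D_{\{i,j\}}$ is a boundary of coincident sections ($w_I \le 1$), the locus "$s_i = s_j$" on $\Mza$ pulls back on $\Mzn$ to the union of all boundary strata $D_J$ with $J \supseteq \{i,j\}$ on which $s_i$ and $s_j$ get collapsed together — precisely those $J \supseteq I$ with $w_J \le 1$ — each appearing with multiplicity one since $\varphi_{\cA}$ is a sequence of blow-ups along smooth centers meeting these divisors transversally. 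This gives $\varphi_{\cA}^*(D_I) = D_I + \sum_{J \supset I,\, w_J \le 1} D_J$.

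\smallskip

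The main obstacle I anticipate is bookkeeping the multiplicities in the pullback formula of part (2): one must confirm that each $D_J$ with $J \supsetneq I$, $w_J \le 1$ enters with coefficient exactly $1$ and that no other boundary divisors appear. This is where the hypothesis that $\varphi_{\cA}$ is a composition of blow-ups \emph{along smooth subvarieties} (from \cite{KiemMoon, Moon2}) does the real work — it lets me reduce to the local model of a single transversal smooth blow-up, where the exceptional divisor appears in the pullback with multiplicity one. A clean alternative that sidesteps the chain-of-blow-ups analysis is to pick, for each candidate boundary divisor $D_K$ of $\Mza$, an explicit test curve $\gamma_K$ in $\Mza$ whose strict transform $\widetilde{\gamma}_K$ in $\Mzn$ is known, and to compare $D_I \cdot \gamma_K$ with $\varphi_{\cA}^*(D_I)\cdot\widetilde\gamma_K = D_I \cdot \gamma_K$ against $\big(D_I + \sum_{J} c_J D_J\big)\cdot \widetilde\gamma_K$ to solve for the $c_J$; this turns the identity into a finite linear-algebra check over the boundary basis of $N^1(\Mzn)$. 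Either way the computation is routine once the geometry of the elementary reduction step is pinned down.
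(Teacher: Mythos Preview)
Your approach is essentially the same as the paper's. The paper does not give a detailed proof of this lemma at all: it simply states that since $\varphi_{\cA}$ is a composition of smooth blow-ups, ``one can easily derive'' the push-forward and pull-back formulas. Your write-up is a faithful expansion of exactly this sketch---tracking exceptional divisors through the chain of elementary reductions and using the standard blow-up formula $\pi^{*}D = \tilde{D} + (\mathrm{mult}_{Z}D)\,E$---so there is nothing to compare.
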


Let $\pi : U \to \Mza$ be the universal curve and $\sigma_{i} : \Mza \to U$ 
for $i = 1, \cdots, n$ be the unversal sections.
Let $\omega = \omega_{U/\Mza}$ be the relative dualizing bundle.
Then we can define several tautological divisors on $\Mza$ by 
using $\omega$ and the intersection theory.
\begin{enumerate}
	\item The \emph{kappa class} is 
	$\kappa = \pi_{*}(c_{1}^{2}(\omega))$.
	This definition is different from $\kappa_{1}$ in \cite{ArbarelloCornalba2}.
	
	\item	For $1 \le i \le n$, let $\mathbb{L}_{i}$ 
	be the line bundle on $\Mza$,
	whose fiber over $(C, s_{1}, s_{2},\cdots, s_{n})$ is 
	$\Omega_C |_{s_{i}}$, 
	a cotangent space at $s_{i}$ in $C$.
	The $i$-th \emph{psi class} is
	$\psi_{i} = c_{1}(\mathbb{L}_{i})$.
	In terms of the intersection theory, 
	$\psi_{i} = \pi_{*}(\omega \cdot \sigma_{i}) = 
	\pi_{*}(-\sigma_{i}^{2})$.
	The \emph{total psi class} is $\psi = \sum_{i=1}^{n}\psi_{i}$.
	
	\item The boundary of curves with coincident sections $D_{\{i,j\}}$ 
	is equal to $\pi_{*}(\sigma_{i}\cdot \sigma_{j})$. 
\end{enumerate}
We focus on the genus zero case only, so the \emph{lambda class} 
$\lambda = c_{1}(\pi_{*}(\omega))$ is zero.

\medskip
Next, consider the push-forwards and pull-backs of several divisors.

\begin{lemma}\label{lem-pushforwardpullbackpsi}
Let $\varphi_{\cA} : \Mzn \to \Mza$ be the reduction morphism.
\begin{enumerate}
	\item $\varphi_{\cA *}(K_{\Mzn}) = K_{\Mza}$.
	\item $\varphi_{\cA *}(\psi_{i}) = \psi_{i} + 
	\sum_{\substack{j \ne i\\ a_{i}+a_{j} \le 1}}D_{\{i, j\}}$.
	\item $\varphi_{\cA}^{*}(\psi_{i}) = 
	\psi_{i} - \sum_{\substack{i \in I\\ w_{I} \le 1}}D_{I}$.
\end{enumerate}
\end{lemma}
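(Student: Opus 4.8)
The plan is to dispose of (1) directly from the blow-up description of $\varphi_{\cA}$, to prove (3) by reducing to the behaviour of a $\psi$-class under a single reduction step, and then to deduce (2) from (3) by pushing forward and invoking Lemma \ref{lem-pushforwardpullback}. Part (1) is immediate: since $\varphi_{\cA}$ is a composition of blow-ups along smooth centers of codimension $\ge 2$, it is a birational morphism of smooth projective varieties, so $K_{\Mzn}=\varphi_{\cA}^{*}K_{\Mza}+E$ with $E$ an effective, $\varphi_{\cA}$-exceptional divisor. Pushing forward, the projection formula gives $\varphi_{\cA *}\varphi_{\cA}^{*}K_{\Mza}=K_{\Mza}$, while $\varphi_{\cA *}E=0$ because each component of $E$ is contracted by $\varphi_{\cA}$ and so has image of strictly smaller dimension. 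Hence $\varphi_{\cA *}(K_{\Mzn})=K_{\Mza}$.

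Part (3) is the crux. The geometry behind it is that $\psi_{i}=\sigma_{i}^{*}\omega_{U/\Mza}$ is the cotangent line at the $i$-th section, and under $\varphi_{\cA}$ the component $C_{I}$ carrying the $i$-th point is collapsed exactly when $\deg(\omega_{C}+\sum a_{j}s_{j})|_{C_{I}}\le 0$, i.e. precisely when $w_{I}\le 1$; collapsing such a tail moves the cotangent line from a point of $C_{I}$ onto the attaching node and so changes $\psi_{i}$ by $-D_{I}$. To make this rigorous I would factor $\varphi_{\cA}$ as a chain of reduction morphisms $\Mzn=Y_{0}\to Y_{1}\to\cdots\to Y_{r}=\Mza$, each crossing a single wall $w_{J}=1$; by \cite{KiemMoon, Moon2} all the boundary divisors appearing along this chain are smooth. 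For one such step $g:Y_{t-1}\to Y_{t}$ the cotangent-line comparison gives $g^{*}\psi_{i}=\psi_{i}-\sum_{J}D_{J}$, the sum over the subsets $J\ni i$ whose tail is collapsed at that step (this is the analogue of $\pi^{*}\psi_{i}=\psi_{i}-D_{\{i,n+1\}}$ for the forgetful map $\pi:\overline{M}_{0,n+1}\to\Mzn$). Composing over the chain, and using Lemma \ref{lem-pushforwardpullback}(2) for the morphism $\Mzn\to Y_{t-1}$ to see that the total transform in $\Mzn$ of each such (nodal-type) $D_{J}$ is again $D_{J}$ with no extra terms, a short induction shows the accumulated correction is exactly $\sum_{i\in I,\,w_{I}\le 1}D_{I}$, which is (3). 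I expect the only real work to be this bookkeeping: checking that each $D_{I}$ with $i\in I$ and $w_{I}\le 1$ has its tail collapsed at exactly one step and enters with coefficient one, and that boundary divisors with $w_{I}>1$ (which survive in $\Mza$) contribute nothing. One can also see (3) in a single step by comparing $\omega_{\overline{M}_{0,n+1}/\Mzn}$ with the pullback of $\omega_{U/\Mza}$ along the induced contraction $\overline{M}_{0,n+1}\to U$ and restricting to the $i$-th section; the discrepancy computation along the collapsed components is the same.

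Finally, (2) follows formally. Applying $\varphi_{\cA *}$ to $\varphi_{\cA}^{*}(\psi_{i})=\psi_{i}-\sum_{i\in I,\,w_{I}\le 1}D_{I}$ and using $\varphi_{\cA *}\varphi_{\cA}^{*}=\mathrm{id}$ on divisor classes yields $\psi_{i}=\varphi_{\cA *}(\psi_{i})-\sum_{i\in I,\,w_{I}\le 1}\varphi_{\cA *}(D_{I})$, where the left-hand $\psi_{i}$ now denotes the class on $\Mza$. By Lemma \ref{lem-pushforwardpullback}(1), $\varphi_{\cA *}(D_{I})=0$ when $|I|\ge 3$ and $w_{I}\le 1$, while $\varphi_{\cA *}(D_{\{i,j\}})=D_{\{i,j\}}$ when $a_{i}+a_{j}\le 1$ (the boundary of curves with coincident sections $s_{i}=s_{j}$). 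Hence $\sum_{i\in I,\,w_{I}\le 1}\varphi_{\cA *}(D_{I})=\sum_{j\ne i,\,a_{i}+a_{j}\le 1}D_{\{i,j\}}$, and rearranging gives $\varphi_{\cA *}(\psi_{i})=\psi_{i}+\sum_{j\ne i,\,a_{i}+a_{j}\le 1}D_{\{i,j\}}$, which is (2).
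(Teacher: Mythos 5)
Your argument is correct, but it is organized differently from the paper's. The paper disposes of this lemma almost entirely by citation: item (1) from the discrepancy being exceptional (same as your argument), and items (2) and (3) separately, as ``more careful observations'' of Lemmas 2.4 and 2.8 of Fedorchuk--Smyth, with an alternative route to (3) via Lemma \ref{lem-pushforwardpullback} and the Farkas--Gibney boundary expression for $\psi_{i}$ on $\Mzn$. Your treatment of (3) --- factoring $\varphi_{\cA}$ into single wall-crossings and tracking the cotangent line at $\sigma_{i}$ through each collapse --- is essentially the standard proof behind the cited Fedorchuk--Smyth lemma, so there is nothing new there beyond making the source's argument explicit; the one point you rightly flag as needing care (that each $D_{I}$ with $i\in I$, $w_{I}\le 1$ is contracted at exactly one step and that its total transform back on $\Mzn$ picks up no extra components, via Lemma \ref{lem-pushforwardpullback}(2) for nodal-type boundaries) does go through. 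Where you genuinely diverge is item (2): rather than proving it independently, you derive it formally from (3) by applying $\varphi_{\cA *}$, using $\varphi_{\cA *}\varphi_{\cA}^{*}=\mathrm{id}$ and Lemma \ref{lem-pushforwardpullback}(1) to kill the $|I|\ge 3$ terms and retain exactly the coincident-section divisors $D_{\{i,j\}}$ with $a_{i}+a_{j}\le 1$. This is cleaner and more economical than the paper's route: it replaces a second geometric analysis (of how $\psi_{i}$ transforms under pushforward) by pure bookkeeping, at the cost of making (2) logically dependent on (3). Both versions are sound; yours has the advantage of isolating all the geometry in a single place.
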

\begin{proof}
Since the discrepancy is supported on the exceptional locus, 
item (1) follows immediately.
Item (2) and (3) are more careful observations of the proof of 
\cite[Lemma 2.4]{FedorchukSmyth} and 
\cite[Lemma 2.8]{FedorchukSmyth} respectively.
Item (3) is also a corollary of cumbersome computation 
using Lemma \ref{lem-pushforwardpullback} and 
\cite[Lemma 3.1]{FarkasGibney}.
\end{proof}

For $I = \{i_{1}, \cdots, i_{r}\}\subset [n]$,
let $D_{I}$ be a boundary of nodal curves.
Set $I^{c} = \{j_{1}, \cdots, j_{s}\}$.
Then $D_{I}$ is isomorphic to $\overline{M}_{0,\cA_{I}} \times 
\overline{M}_{0,\cA_{I^{c}}}$
where $\cA_{I} = (a_{i_{1}}, \cdots, a_{i_{r}}, 1)$
and $\cA_{I^{c}} = (a_{j_{1}}, \cdots, a_{j_{s}},1)$.
Let $\eta_{I} : \overline{M}_{0,\cA_{I}} \times \overline{M}_{0, \cA_{J}}
\to D_{I} \hookrightarrow \Mza$ be the inclusion morphism.
Define $\pi_{i}$ for $i=1,2$ as the projection from 
$\overline{M}_{0,\cA_{I}} \times \overline{M}_{0,\cA_{J}}$
to the $i$-th component.

\begin{lemma}\label{lem-restrictiontoboundary}
Let $\eta_{I} : \overline{M}_{0,\cA_{I}} \times \overline{M}_{0, \cA_{J}}
\to D_{I} \hookrightarrow \Mza$ be the inclusion morphism.
Let $p$ (resp. $q$) denote the last index of $\cA_{I}$ (resp. $\cA_{J}$) 
with weight one.
\begin{enumerate}
	\item $\eta_{I}^{*}(\kappa) = \pi_{1}^{*}(\kappa + \psi_{p})+
	\pi_{2}^{*}(\kappa + \psi_{q})$.
	\item $\eta_{I}^{*}(\psi_{i}) = \begin{cases}\pi_{1}^{*}(\psi_{i}),&
	i \in I\\ \pi_{2}^{*}(\psi_{i}),& i \in I^{c}.\end{cases}$
	\item For $J \subset [n]$, suppose that $D_{J}$ be a boundary 
	of nodal curves.\\
	$\eta_{I}^{*}(D_{J}) = \begin{cases}
	\pi_{1}^{*}(D_{J}),& J \subsetneq I\\
	\pi_{2}^{*}(D_{J}),& J \subsetneq I^{c}\\
	\pi_{1}^{*}(\psi_{p})+\pi_{2}^{*}(\psi_{q}),& J=I\\
	0, & \mbox{otherwise}.\end{cases}$
	\item Suppose that $a_{i}+a_{j} \le 1$.\\
	$\eta_{I}^{*}(D_{\{i,j\}}) = \begin{cases}
	\pi_{1}^{*}(D_{\{i,j\}}),& i, j \in I\\
	\pi_{2}^{*}(D_{\{i,j\}}),& i, j \in I^{c}\\
	0,& \mbox{otherwise}.\end{cases}$
\end{enumerate}
\end{lemma}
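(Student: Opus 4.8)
The plan is to compute each restriction by analyzing the geometry of the boundary divisor $D_I$ and the universal curve over it. Recall that a point of $D_I$ parametrizes a nodal curve $C = C_I \cup C_{I^c}$ glued at a node, where $C_I$ carries the marked points indexed by $I$ (plus the node, which plays the role of the extra weight-one point $p$) and similarly for $C_{I^c}$ and $q$. Thus the universal curve $U|_{D_I}$ is the union of two families $\pi_1^* U_{\cA_I}$ and $\pi_2^* U_{\cA_J}$ glued along the sections $\sigma_p$ and $\sigma_q$. I would set up this picture carefully first, since every item reduces to pulling back the relevant tautological class under $\eta_I$ and then chasing it through this fibered description.

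For item (1), I would use that the kappa class is defined fiberwise as $\pi_*(c_1^2(\omega))$; restricting $\omega_{U/\Mza}$ to each of the two sub-families of the reducible universal curve, the relative dualizing sheaf acquires a twist by the gluing section (adjunction for the node), which is precisely the source of the extra $\psi_p$ and $\psi_q$ terms. This is the standard computation as in \cite{ArbarelloCornalba2, HarrisMorrison}, adapted to the weighted setting. Item (2) is the easiest: for $i \in I$ the section $\sigma_i$ of the universal curve over $D_I$ factors through the first sub-family, so $\psi_i = c_1(\mathbb{L}_i)$ restricts to $\pi_1^*(\psi_i)$, and symmetrically for $i \in I^c$; this needs only the definition of $\psi_i$ via the cotangent line.

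For items (3) and (4), the key is to understand which boundary divisors of $\Mza$ meet $D_I$ and how. If $J \subsetneq I$ then $D_J \cap D_I$ parametrizes curves where $C_I$ itself degenerates along the partition $J \sqcup (I \setminus J)$, giving $\pi_1^*(D_J)$; symmetrically for $J \subsetneq I^c$. If $J = I$ (or $J = I^c$), then $D_J = D_I$ and we are computing the self-intersection class $\eta_I^* D_I = N_{D_I/\Mza}$, which for a node-smoothing divisor is the tensor product of the two cotangent lines at the node, yielding $\pi_1^*(\psi_p) + \pi_2^*(\psi_q)$ — this is Keel's formula in the weighted guise. In all remaining cases $J$ is incompatible with the partition $\{I, I^c\}$ (it separates points that are together, or vice versa), so $D_J \cap D_I = \emptyset$ and the restriction is $0$; one has to enumerate these cases and check disjointness, invoking Lemma \ref{lem-pushforwardpullback} where a divisor on $\Mza$ must first be identified. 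Item (4) is the analogue for boundaries of coincident sections $D_{\{i,j\}}$: the locus $s_i = s_j$ meets $D_I$ only when $i,j$ lie on the same component, again reducing to the corresponding locus on one factor, and is empty otherwise.

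The main obstacle I anticipate is item (1) — correctly tracking the adjunction twist of $\omega_{U/\Mza}$ at the gluing node and confirming the coefficient of $\psi_p$, $\psi_q$ is exactly one under this particular normalization of $\kappa = \pi_*(c_1^2(\omega))$ (which, as the text notes, differs from the $\kappa_1$ of \cite{ArbarelloCornalba2}) — together with the bookkeeping in item (3) of verifying that every $J$ not among the listed cases truly gives empty intersection in the \emph{weighted} space, where some $D_J$ on $\Mzn$ have been contracted by $\varphi_{\cA}$ and must be handled via Lemma \ref{lem-pushforwardpullback}.
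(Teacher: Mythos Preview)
Your proposal is correct and is exactly the standard argument underlying the references the paper cites: the paper's proof simply notes that the computations are identical to the $\Mzn$ case, citing \cite[Section~1]{ArbarelloCornalba1} for item~(1), declaring items~(2) and~(4) clear, and invoking \cite[Proposition~3.31]{HarrisMorrison} for the self-intersection case $J=I$ in item~(3). Your detailed plan unpacks precisely these references, so the approaches coincide; one small caution is that the normal bundle $N_{D_I/\Mza}$ is the tensor product of the \emph{tangent} (not cotangent) lines at the node, so be careful with the sign when you carry out that step.
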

\begin{proof}
The proof of these items are essentially identical to the case of $\Mzn$.
Item (1) is in \cite[Section 1]{ArbarelloCornalba1}.
Items (2), (4) are clear.
The only non obvious part of item (3) is due to 
\cite[Proposition 3.31]{HarrisMorrison}.
\end{proof}

Let $J \subset [n]$ be a \emph{maximal} subset of $[n]$ such that 
$\sum_{j \in J}a_{j} \le 1$.
Let $\cA'$ be the new weight data obtained by replacing weights  
indexed by $J$ by one weight $\sum_{j \in J}a_{j}$.
Then the locus of $\sigma_{i}= \sigma_{j}$ for all $i, j \in J$ 
is isomorphic to $\overline{M}_{0, \cA'}$ because we can replace 
sections $\{\sigma_{j}\}_{j \in J}$ by one section with weight 
$\sum_{j \in J}a_{j}$.
Let $\chi_{J} : \overline{M}_{0, \cA'} \to \Mza$ be 
the replacement morphism.

\begin{lemma}\label{lem-restrictiontosectionalboundary}
Let $\chi_{J} : \overline{M}_{0,\cA'} \to \Mza$ be the replace morphism.
Let $p$ denote the unique index of $\cA'$ replacing indices in $J$.
\begin{enumerate}
	\item $\chi_{J}^{*}(\psi_{i}) = \begin{cases}
	\psi_{i},&i \notin J \\ \psi_{p},& i \in J.\end{cases}$
	\item $\chi_{J}^{*}(D_{\mathrm{nod}})
	= D_{\mathrm{nod}}$.
	\item Suppose that $D_{\{i,j\}}$ is a boundary of curves with 
	coincident sections.\\
	$\chi_{J}^{*}(D_{\{i,j\}}) = \begin{cases}
	D_{\{i,j\}},& i, j \notin J\\
	D_{\{i,p\}}, & i \notin J, j \in J\\
	-\psi_{p},& i,j \in J.\end{cases}$
\end{enumerate}
\end{lemma}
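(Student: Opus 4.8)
The plan is to derive all three formulas from one geometric input: an explicit description of $\chi_J$ together with the universal curve over $\overline{M}_{0,\cA'}$. Recall that $\chi_J$ identifies $\overline{M}_{0,\cA'}$ with the closed locus $Z_J\subset\Mza$ on which the sections $\sigma_j$, $j\in J$, all coincide. First I would check, using maximality of $J$, that this replacement is legitimate: if $i\notin J$ then $w_{J\cup\{i\}}>1$, so on an $\cA$-stable curve the section $\sigma_i$ cannot meet the common point of the $\sigma_j$'s; hence pulling back the universal curve $\pi:U\to\Mza$ along $\chi_J$ and merging the $|J|$ coinciding sections into a single section of weight $w_J$ produces an $\cA'$-stable family whose classifying map is the identity of $\overline{M}_{0,\cA'}$. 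Therefore $\chi_J^{*}U$ is canonically the universal curve $\pi':U'\to\overline{M}_{0,\cA'}$, compatibly with sections: $\chi_J^{*}\sigma_i=\sigma'_i$ for $i\notin J$, and $\chi_J^{*}\sigma_j=\sigma'_p$ for every $j\in J$.

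Item (1) is then immediate: $\psi_i=c_1(\mathbb{L}_i)$ with $\mathbb{L}_i$ the cotangent line bundle along $\sigma_i$, and this construction commutes with the base change $\chi_J$, so the identification of sections above yields $\chi_J^{*}\psi_i=\psi_i$ for $i\notin J$ and $\chi_J^{*}\psi_i=\psi_p$ for $i\in J$.

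For item (2) I would work boundary-stratum by boundary-stratum. A nodal boundary $D_I\subset\Mza$ meets $Z_J$ if and only if no node of its generic member separates two indices of $J$, that is, if and only if $J\subseteq I$ or $J\subseteq I^{c}$; in that case $\chi_J$ carries the nodal boundary $D_{I'}$ of $\overline{M}_{0,\cA'}$ onto $D_I\cap Z_J$, where $I'$ results from $I$ by replacing the block $J$ with the single index $p$. This sets up a bijection between the nodal boundaries of $\overline{M}_{0,\cA'}$ and the nodal boundaries of $\Mza$ meeting $Z_J$; the remaining nodal boundaries of $\Mza$ pull back to $0$, being disjoint from $Z_J$. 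A local computation---$\chi_J$ is transverse to $D_I$, the node-smoothing parameter of $D_I$ restricting to that of $D_{I'}$---shows each nonzero pullback is reduced, and summing gives $\chi_J^{*}D_{\mathrm{nod}}=D_{\mathrm{nod}}$.

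Item (3) combines the identity $D_{\{i,j\}}=\pi_{*}(\sigma_i\cdot\sigma_j)$ from Section~\ref{sec-divisors} with base change along $\chi_J$, which gives $\chi_J^{*}D_{\{i,j\}}=\pi'_{*}(\chi_J^{*}\sigma_i\cdot\chi_J^{*}\sigma_j)$. For $i,j\notin J$ this is $\pi'_{*}(\sigma'_i\cdot\sigma'_j)=D_{\{i,j\}}$ on $\overline{M}_{0,\cA'}$ (the inequality $a_i+a_j\le1$ survives the replacement); for $i,j\in J$ it is $\pi'_{*}\big((\sigma'_p)^{2}\big)=-\psi_p$ by the self-intersection formula recalled in Section~\ref{sec-divisors}; and for exactly one index in $J$ it is $\pi'_{*}(\sigma'_i\cdot\sigma'_p)$, which I would identify with $D_{\{i,p\}}$. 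I expect this last case to be the main obstacle: since $J$ is maximal, $a_i+w_J>1$, so $\sigma'_i$ and $\sigma'_p$ never collide on a smooth $\cA'$-stable curve, and the class $\pi'_{*}(\sigma'_i\cdot\sigma'_p)$ therefore cannot be read off naively from a coincidence locus; instead one must follow a one-parameter degeneration into the divisor $D_{\{i,p\}}$ of $\overline{M}_{0,\cA'}$ and verify the multiplicity there. The reducedness claims in (2) and (3) similarly rest on the transversality of $\chi_J$ to the relevant boundary divisors, which one obtains from the deformation theory of the pointed curves in question.
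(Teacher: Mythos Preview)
The paper does not actually prove this lemma: its entire proof is the sentence ``Essentially this is a restatement of \cite[Lemma 2.9]{FedorchukSmyth}.'' Your write-up therefore supplies the argument rather than competing with one, and your treatment of items (1), (2), and the first and third cases of item (3) is correct and in the expected spirit.

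Your handling of the middle case of (3) contains a genuine confusion, though. You correctly observe that maximality of $J$ forces $a_i+w_J>1$, so $\sigma'_i$ and $\sigma'_p$ cannot coincide on a smooth $\cA'$-stable curve. But the same inequality forbids them from coinciding on \emph{any} $\cA'$-stable curve: sections lie in the smooth locus, and the coincidence constraint ``total weight at a point $\le 1$'' applies uniformly. Hence $\sigma'_i\cap\sigma'_p=\emptyset$ in $U'$ and $\pi'_*(\sigma'_i\cdot\sigma'_p)=0$ on the nose; there is no degeneration to follow and no multiplicity to check. Equivalently, your own first paragraph already shows that $Z_J$ is disjoint from $D_{\{i,j\}}$ for $i\notin J$, $j\in J$, since a point in the intersection would carry coinciding sections of weight $a_i+w_J>1$. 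So under the maximality hypothesis the symbol $D_{\{i,p\}}$ in the statement is simply the zero class, and the identity $\chi_J^*(D_{\{i,j\}})=D_{\{i,p\}}$ is the trivial one $0=0$. The formula is best read as valid for any $J$ with $w_J\le 1$, not only maximal ones; when $J$ is not maximal and $a_i+w_J\le 1$, your base-change computation $\pi'_*(\sigma'_i\cdot\sigma'_p)$ gives the honest coincident-section divisor $D_{\{i,p\}}$ directly, again with no degeneration argument required.
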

\begin{proof}
Essentially this is a restatement of \cite[Lemma 2.9]{FedorchukSmyth}.
\end{proof}

Finally, let us recall the canonical divisor of $\Mza$.
The follwing formula is a consequence of Hassett's computation of 
the canonical divisor and the weighted version of Mumford's relation 
$\kappa = -D_{\mathrm{nod}}$
(\cite[(3.15)]{ArbarelloCornalba2}).
\begin{lemma}\label{lem-canonicaldivisorMza}
\begin{equation}\label{eqn-canonicaldivisorMza}
	K_{\Mza} = -2D_{\mathrm{nod}}+\sum_{i=1}^{n}\psi_{i}
	= 2\kappa + \sum_{i=1}^{n}\psi_{i}.
\end{equation}
\end{lemma}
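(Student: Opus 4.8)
The statement to prove is Lemma~\ref{lem-canonicaldivisorMza}, the formula $K_{\Mza} = -2D_{\mathrm{nod}}+\sum \psi_i = 2\kappa + \sum \psi_i$.

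The plan is to combine two ingredients: Hassett's explicit computation of the canonical class of $\Mza$ and a weighted analogue of Mumford's relation in genus zero. First I would recall from \cite[Section~3]{Hassett} (or rather its genus-zero specialization) that the canonical divisor of $\Mza$ is expressed in terms of the tautological classes as a combination of $\kappa_1$, the $\psi_i$, and the boundary; in the unweighted case this is the classical Harris--Mumford formula $K_{\Mzn}=13\lambda - 2\delta + \psi$ specialized to $g=0$, where $\lambda=0$. The point is that the reduction morphism $\varphi_{\cA}$ and the pull-back/push-forward formulas of Lemma~\ref{lem-pushforwardpullback} and Lemma~\ref{lem-pushforwardpullbackpsi} let one transport such an identity between $\Mzn$ and $\Mza$; alternatively one uses Hassett's direct intrinsic computation on $\Mza$. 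Either way one lands on an expression of the form $K_{\Mza} = a\kappa + b\sum\psi_i + c\,D_{\mathrm{nod}}$ with specific small integer coefficients.

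Second, I would invoke the weighted version of Mumford's relation, $\kappa = -D_{\mathrm{nod}}$, which in genus zero follows from $(3.15)$ of \cite{ArbarelloCornalba2}: on $\overline M_{0,n}$ one has $\kappa_1 = \sum\psi_i - \delta$ with the conventions there, but with the normalization of $\kappa$ used in this paper (namely $\kappa = \pi_*(c_1^2(\omega))$, which differs from $\kappa_1$) the clean statement becomes $\kappa = -D_{\mathrm{nod}}$. I would verify this by the standard argument: write $c_1(\omega) = c_1(\omega_{\pi}(\sum \sigma_i)) - \sum[\sigma_i]$, expand the square, push forward, and use $\pi_*(\sigma_i^2)=-\psi_i$, $\pi_*(\sigma_i\sigma_j)=D_{\{i,j\}}$ together with the fact that $\pi_*(c_1^2(\omega_\pi(\sum\sigma_i)))$ collects the boundary contributions; in genus zero the $\lambda$ term vanishes and everything assembles into $\kappa = -D_{\mathrm{nod}}$. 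Substituting this relation into the expression from the first step turns the $\kappa$--form of the answer into the $D_{\mathrm{nod}}$--form and vice versa, giving both equalities in \eqref{eqn-canonicaldivisorMza} simultaneously.

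The main obstacle, such as it is, is bookkeeping rather than conceptual: one must be scrupulous about the normalization of $\kappa$ (which the paper explicitly flags as different from $\kappa_1$ of \cite{ArbarelloCornalba2}), about which boundary divisors count as "nodal" versus "coincident sections" in the weighted setting, and about the fact that in $\Mza$ the boundary $D_{\mathrm{nod}}$ is only the nodal part of the total boundary. Getting the coefficient $-2$ (not $-1$ or $-\delta$-with-half) right requires tracking these conventions carefully through Hassett's formula; once the conventions are pinned down, the computation is short. I would therefore present the proof as: (i) state Hassett's canonical-class formula for $\Mza$; (ii) state and prove (or cite) the genus-zero weighted Mumford relation $\kappa = -D_{\mathrm{nod}}$; (iii) combine, checking coefficients.
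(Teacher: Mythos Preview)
Your proposal is correct and follows essentially the same route as the paper: combine Hassett's formula $K_{\Mza}=\tfrac{13}{12}\kappa-\tfrac{11}{12}D_{\mathrm{nod}}+\sum\psi_i$ with the genus-zero Mumford relation $\kappa=-D_{\mathrm{nod}}$. The one difference is in the logical order: you plan to establish the weighted Mumford relation on $\Mza$ first (by a direct intersection computation) and then substitute, whereas the paper instead uses the \emph{unweighted} Mumford relation on $\Mzn$, pushes forward $K_{\Mzn}=-2D_{\mathrm{nod}}+\sum\psi_i$ via $\varphi_{\cA *}$ (using Lemma~\ref{lem-pushforwardpullbackpsi} and $\varphi_{\cA *}(D_{\mathrm{nod}})=D_{\mathrm{nod}}+D_{\mathrm{sec}}$, noting that the $D_{\mathrm{sec}}$ contributions cancel), and only then compares with Hassett's formula to \emph{deduce} $\kappa=-D_{\mathrm{nod}}$ on $\Mza$. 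The paper's order sidesteps any intrinsic computation on $\Mza$, which is a small economy; your direct verification is equally valid and you already flagged the relevant bookkeeping hazards (the $\kappa$ vs.\ $\kappa_1$ normalization and the nodal/sectional split of the boundary).
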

\begin{proof}
By \cite[Section 3.3.1]{Hassett},
\begin{equation}\label{eqn-canonicaldivisor}
	K_{\Mza} = \frac{13}{12}\kappa - \frac{11}{12}D_{\mathrm{nod}}
	+\sum_{i=1}^{n}\psi_{i}.
\end{equation}
By the ordinary Mumford's relation $\kappa = - D_{\mathrm{nod}}$ 
on $\Mzn$, $\varphi_{\cA *}(D_{\mathrm{nod}}) = 
D_{\mathrm{nod}}+D_{\mathrm{sec}}$
and Lemma \ref{lem-pushforwardpullbackpsi}, it is straightforward to check
\[
	K_{\Mza} = \varphi_{\cA *}(K_{\Mzn})
	= \varphi_{\cA *}(\frac{13}{12}\kappa - 
	\frac{11}{12}D_{\mathrm{nod}} + \sum_{i=1}^{n}\psi_{i})
	= -2D_{\mathrm{nod}}+\sum_{i=1}^{n}\psi_{i}.
\]
Thus $\kappa = - D_{\mathrm{nod}}$ on $\Mza$ too.
Substitute it to the right side of (\ref{eqn-canonicaldivisor}), 
we get the above formula.
\end{proof}

\section{Proof of Theorems}\label{sec-proof}
In this section, we prove our main theorem.
Through this section, we will assume $n \ge 4$.
If $n = 3$, then $\overline{M}_{0,3}$ is a point,
so there is nothing to prove.
 
\begin{theorem}\label{thm-mainthm}
Let $\cA = (a_{1},a_{2},\cdots, a_{n})$ be a weight datum.
Then the log canonical model 
$\overline{M}_{0,n}(K_{\Mzn}+ \sum_{i=1}^{n} a_{i}\psi_{i})$ 
is isomorphic to $\Mza$.
\end{theorem}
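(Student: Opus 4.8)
The plan is to follow the outline sketched in the introduction, reducing the computation of the log canonical model to an ampleness statement on the Hassett space $\Mza$, which is then established by an induction on $n$ combined with a positivity input. Set $\Delta_{\cA} = K_{\Mzn} + \sum_{i=1}^n a_i \psi_i$ and let $\varphi_{\cA} : \Mzn \to \Mza$ be the reduction morphism. First I would compute $\varphi_{\cA *}(\Delta_{\cA})$ explicitly using Lemma \ref{lem-pushforwardpullbackpsi}: since $\varphi_{\cA *}(K_{\Mzn}) = K_{\Mza}$ and $\varphi_{\cA *}(\psi_i) = \psi_i + \sum_{j \ne i, a_i + a_j \le 1} D_{\{i,j\}}$, one obtains
\[
	\varphi_{\cA *}(\Delta_{\cA}) = K_{\Mza} + \sum_{i=1}^n a_i\psi_i + \sum_{a_i + a_j \le 1}(a_i + a_j)D_{\{i,j\}}.
\]
Then, using $\varphi_{\cA}^{*}$ from Lemma \ref{lem-pushforwardpullback} and Lemma \ref{lem-pushforwardpullbackpsi}(3), I would verify that $\Delta_{\cA} - \varphi_{\cA}^{*}\varphi_{\cA *}(\Delta_{\cA})$ is effective and supported on the exceptional locus of $\varphi_{\cA}$; this is a bookkeeping check over the boundary divisors $D_I$ with $w_I \le 1$. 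By \cite[Lemma 7.11]{Debarre} the section rings agree, so the theorem reduces to proving that $\varphi_{\cA *}(\Delta_{\cA})$ is ample on $\Mza$.

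Next I would rewrite $\varphi_{\cA *}(\Delta_{\cA})$ in tautological terms. Using Lemma \ref{lem-canonicaldivisorMza}, namely $K_{\Mza} = 2\kappa + \psi$, we get an expression of the shape $\varphi_{\cA *}(\Delta_{\cA}) = 2\kappa + \sum_i (1 + a_i)\psi_i + \sum_{a_i+a_j\le 1}(a_i+a_j)D_{\{i,j\}}$ (to be confirmed by the computation). The target is Kleiman's criterion: show this class lies in the interior of $Nef(\Mza)$. I would first show it is nef by checking non-negative intersection with every irreducible curve $C \subset \Mza$. The engine here is the positivity result of Fedorchuk (Proposition \ref{prop-positivity}, stated later in the paper) together with induction on $n$: if $C$ is contained in a boundary divisor $D_I$ of nodal curves, then via the isomorphism $D_I \cong \overline{M}_{0,\cA_I}\times\overline{M}_{0,\cA_{I^c}}$ and the restriction formulas of Lemma \ref{lem-restrictiontoboundary}, the restriction of $\varphi_{\cA *}(\Delta_{\cA})$ to $D_I$ is a non-negative combination of analogous (inductively nef) classes on the two lower-dimensional factors, so it pairs non-negatively with $C$; similarly if $C$ lies in a boundary $D_{\{i,j\}}$ of coincident sections, I would use Lemma \ref{lem-restrictiontosectionalboundary} and the replacement morphism $\chi_J$ to drop to a smaller Hassett space. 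If $C$ is not contained in any boundary divisor, then $C$ meets the interior, and the positivity proposition applied to the $\kappa$ and $\psi$ contributions, plus the effectivity of the boundary terms, forces $\varphi_{\cA *}(\Delta_{\cA})\cdot C \ge 0$. Care is needed with the base cases of the induction (small $n$, where $\Mza$ is a point or $\PP^1$ or a del Pezzo-type surface) and with degenerate weight data where some $a_i + a_j = 1$ exactly.

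Finally, to upgrade nefness to ampleness I would show that $\varphi_{\cA *}(\Delta_{\cA}) - \sum_{K} \epsilon_K D_K$ remains nef for all sufficiently small $\epsilon_K \ge 0$, where the sum runs over the finitely many boundary divisor classes (both nodal and sectional). The same induction-and-restriction argument applies, since perturbing by boundary divisors perturbs the restrictions to boundary strata by boundary divisors of the smaller spaces, and on the interior the perturbation is harmless. Because $N^1(\Mza)$ is spanned by boundary divisor classes (this is inherited from Keel's result on $\Mzn$ via $\varphi_{\cA}$), a class that stays nef after subtracting small multiples of a spanning set of effective divisors must lie in the interior of the nef cone, hence is ample by Kleiman's criterion. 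Then
\[
	\Mzn(\Delta_{\cA}) = \proj\!\left(\bigoplus_{l\ge 0}H^0(\Mzn,\cO(l\Delta_{\cA}))\right) \cong \proj\!\left(\bigoplus_{l\ge 0}H^0(\Mza,\cO(l\varphi_{\cA *}(\Delta_{\cA})))\right) \cong \Mza,
\]
the last isomorphism because an ample divisor on a projective variety recovers the variety as its $\proj$. The main obstacle I anticipate is the nefness step for curves meeting the interior: getting the right form of the tautological expression for $\varphi_{\cA *}(\Delta_{\cA})$ and matching it precisely against the hypotheses of Fedorchuk's positivity proposition, so that all coefficients come out non-negative and the induction actually closes; the boundary-restriction combinatorics (keeping track of which $D_{\{i,j\}}$ survive push-forward and how weights merge under $\chi_J$) is the delicate part.
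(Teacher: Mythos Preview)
Your overall architecture matches the paper's: reduce to ampleness of $\varphi_{\cA *}(\Delta_{\cA})$ on $\Mza$ via the effective exceptional discrepancy and \cite[Lemma 7.11]{Debarre}, then prove nefness by induction on $n$ and upgrade to ampleness by a perturbation/interior-of-nef-cone argument. The boundary-stratum steps you describe (using $\eta_I$ and $\chi_J$) are also what the paper does.

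The genuine gap is the nefness step for a curve $B$ whose generic point parameterizes a smooth curve. Your sentence ``the positivity proposition applied to the $\kappa$ and $\psi$ contributions, plus the effectivity of the boundary terms, forces $\varphi_{\cA *}(\Delta_{\cA})\cdot C \ge 0$'' does not work as stated: Fedorchuk's positivity (Proposition~\ref{prop-positivity}) asserts nefness of $L=\omega_\pi+\sum a_i\sigma_i$ on the \emph{total space} $S$ of the family, not nefness of $\kappa$ or $\psi_i$ on the base; in fact $\kappa=-D_{\mathrm{nod}}$ is not nef, so you cannot separate the expression into ``nef $\kappa,\psi$ part plus effective boundary part.'' The paper's key step, which you are missing, is an algebraic factorization on $S$:
\[
2\omega^2+\sum_i(1+a_i)(\omega\cdot\sigma_i)+\sum_{\substack{i<j\\a_i+a_j\le 1}}(a_i+a_j)(\sigma_i\cdot\sigma_j)
=(\omega+\textstyle\sum_i a_i\sigma_i)\cdot(2\omega+\textstyle\sum_i\sigma_i),
\]
using $\omega\cdot\sigma_i=-\sigma_i^2$ and $\sigma_i\cdot\sigma_j=0$ when $a_i+a_j>1$. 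Then $\varphi_{\cA *}(\Delta_{\cA})|_B=\pi_*\big((\text{nef})\cdot(\text{effective})\big)\ge 0$, where nefness of the first factor is Proposition~\ref{prop-positivity} and effectivity of $2\omega_\pi+\sum_i\sigma_i$ is a separate lemma (Lemma~\ref{lem-effectivity}) proved by its own induction and a blow-up/blow-down comparison with $(1/2,\dots,1/2)$-semistable families. Without this factorization and the effectivity lemma, the interior case does not close.

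A smaller point on ampleness: your plan ``show $\varphi_{\cA *}(\Delta_{\cA})-\sum_K\epsilon_KD_K$ is still nef by the same restriction argument'' is delicate, because restricting $D_I$ to itself produces $-\psi$ classes rather than boundary classes on the factors, so the inductive hypothesis is not literally the same. The paper instead exploits that for small $\delta>0$ the weights $\cA_1=(a_i-\delta)$ give the \emph{same} moduli space, so $\varphi_{\cA *}(\Delta_{\cA})=\varphi_{\cA *}(\Delta_{\cA_1})+\delta\,\varphi_{\cA *}(\psi)$ is (nef)$+$(strictly positive combination of all boundaries), which immediately yields room to perturb on curves not trapped in a stratum; the strata themselves are handled by induction and by the $\chi_J$ computation, which produces $\varphi_{\cA' *}(\Delta_{\cA'})$ plus a nonnegative remainder.
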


\begin{proof}
Fix a weight datum $\cA = (a_{1},a_{2},\cdots, a_{n})$.
Let $\Delta_{\cA} = K_{\Mzn}+\sum_{i=1}^{n}a_{i}\psi_{i}$.
Set $C = \{I \subset [n] | w_{I}= \sum_{i \in I}a_{i}\le 1, 
2 \le |I| \le n-2 \}$.
By Lemma \ref{lem-pushforwardpullbackpsi} and \ref{lem-canonicaldivisorMza},
it is straightforward to check that 
\begin{equation}\label{eqn-pushforward}
\begin{split}
	\varphi_{\cA *}(\Delta_{\cA}) &=
	K_{\Mza} + \sum_{i=1}^{n}a_{i}\psi_{i} + 
	\sum_{\substack{i<j\\a_{i}+a_{j}\le 1}}(a_{i}+a_{j})D_{\{i,j\}}\\
	&= -2D_{\mathrm{nod}} + \sum_{i=1}^{n}(1+a_{i})\psi_{i}
	+ \sum_{\substack{i<j\\a_{i}+a_{j}\le 1}}(a_{i}+a_{j})D_{\{i,j\}}.
\end{split}
\end{equation}
By Lemma \ref{lem-pushforwardpullback} and 
\ref{lem-pushforwardpullbackpsi},
\begin{equation}\label{eqn-pushforwardpullback}
\begin{split}
	\varphi_{\cA}^{*}\varphi_{\cA *}(\Delta_{\cA}) &=
	-2D_{\mathrm{nod}}+2 \sum_{I \in C}D_{I}+\sum_{i=1}^{n}
	(1+a_{i})\psi_{i} - \sum_{I \in C}(|I|+w_{I})D_{I}\\
	&+ \sum_{\substack{i<j \\ a_{i}+a_{j}}}(a_{i}+a_{j})D_{\{i,j\}}
	+ \sum_{\substack{I \in C\\ |I| \ge 3}}(|I|-1)w_{I}D_{I}\\
	&= -2D_{\mathrm{nod}}+\sum_{i=1}^{n}(1+a_{i})\psi_{i} 
	+ \sum_{I \in C}(|I|-2)(w_{I} -1)D_{I}.
\end{split}
\end{equation}
So 
\begin{equation}\label{eqn-difference}
	\Delta_{\cA} - \varphi_{\cA}^{*}\varphi_{\cA *}(\Delta_{\cA})
	= \sum_{I \in C}(|I|-2)(1-w_{I})D_{I}.
\end{equation}
Note that for every $I \in C$, 
$|I| \ge 2$ and $w_{I} \le 1$ by the definition of $C$.
So the difference $\Delta_{\cA} - 
\varphi_{\cA}^{*}\varphi_{\cA *}(\Delta_{\cA})$
is supported on the exceptional locus of $\varphi_{\cA}$ 
and effective.
This implies that 
\[
	H^{0}(\Mzn, \Delta_{\cA}) \cong 
	H^{0}(\Mzn, \varphi_{\cA}^{*}\varphi_{\cA *}(\Delta_{\cA})) \cong
	H^{0}(\Mza, \varphi_{\cA *}(\Delta_{\cA}))
\]
by \cite[Lemma 7.11]{Debarre}.
The same statement holds for a positive multiple of 
$\Delta_{\cA}$, too.
Therefore from the definition of the log canonical model, we get
\begin{equation}\label{eqn-lcmodel}
\begin{split}
	\Mzn(K_{\Mzn}+\sum_{i=1}^{n}a_{i}\psi_{i})
	&=  \proj\left(\bigoplus_{l \ge 0} 
	H^0(\Mzn, \cO(l \Delta_{\cA}))\right)\\
	&= \proj \left(\bigoplus_{l \ge 0}
	H^{0}(\Mza, \cO(l \varphi_{\cA *}(\Delta_{\cA})))\right).
\end{split}
\end{equation}
If we prove $\varphi_{\cA *}(\Delta_{\cA})$ is ample, 
then the last birational model is exactly $\Mza$.
So to prove the main theorem, it suffices to show that 
$\varphi_{\cA *}(\Delta_{\cA})$ is ample on $\Mza$.
This is done in Proposition \ref{prop-nef} and \ref{prop-ample}.
\end{proof}

\begin{proposition}\label{prop-nef}
Let $\cA = (a_{1}, \cdots, a_{n})$ be a weight datum
and let $\Delta_{\cA} = K_{\Mzn}+\sum_{i=1}^{n}a_{i}\psi_{i}$.
Then for the reduction morphism $\varphi_{\cA} : \Mzn \to \Mza$, 
$\varphi_{\cA *}(\Delta_{\cA})$ is a nef divisor on $\Mza$.
\end{proposition}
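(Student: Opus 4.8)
The plan is to check that $L := \varphi_{\cA *}(\Delta_{\cA})$ meets every irreducible curve $B \subset \Mza$ non-negatively, which gives nefness. Throughout I would use, whichever is more convenient, the two expressions for $L$ coming from~(\ref{eqn-pushforward}) together with Lemma~\ref{lem-canonicaldivisorMza}: namely $L = 2\kappa + \sum_{i=1}^{n}(1+a_{i})\psi_{i} + \sum_{a_{i}+a_{j}\le 1}(a_{i}+a_{j})D_{\{i,j\}}$ and $L = K_{\Mza} + \sum_{i=1}^{n}a_{i}\psi_{i} + \sum_{a_{i}+a_{j}\le 1}(a_{i}+a_{j})D_{\{i,j\}}$. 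The argument is by induction on $n$. For $n=4$, $\Mza \cong \PP^{1}$ and the only irreducible curve is $\Mza$ itself, so it suffices to note that $\deg_{\PP^{1}}L = -2\deg D_{\mathrm{nod}} + \sum_{i}(1+a_{i})\deg\psi_{i} + (\text{non-negative})$ is positive, which is a direct count using $\sum_{i}a_{i}>2$.

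For $n\ge 5$, fix an irreducible curve $B$. Suppose first that $B$ lies in a boundary divisor of nodal curves $D_{I} \cong \overline{M}_{0,\cA_{I}} \times \overline{M}_{0,\cA_{I^{c}}}$. Restricting the $\kappa$-form of $L$ by Lemma~\ref{lem-restrictiontoboundary} and using that the node-markings $p,q$ of $\cA_{I}$, $\cA_{I^{c}}$ carry weight one---so that the coefficient $2$ which $\psi_{p}$, $\psi_{q}$ acquire from $\eta_{I}^{*}(2\kappa)$ is exactly $1+a_{p}=1+a_{q}$---one obtains $\eta_{I}^{*}(L) = \pi_{1}^{*}\big(\varphi_{\cA_{I}*}\Delta_{\cA_{I}}\big) + \pi_{2}^{*}\big(\varphi_{\cA_{I^{c}}*}\Delta_{\cA_{I^{c}}}\big)$. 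Since $\cA_{I}$ and $\cA_{I^{c}}$ have fewer than $n$ markings, both summands are nef by the inductive hypothesis, hence $L\cdot B\ge 0$. Suppose next that $B$ lies in a boundary divisor of curves with coincident sections $D_{\{i,j\}}$; this is isomorphic to a Hassett space $\overline{M}_{0,\widetilde{\cA}}$ on $n-1$ markings, obtained by merging $i$ and $j$ into a single marking of weight $a_{i}+a_{j}$. Computing $L|_{D_{\{i,j\}}}$ via Lemma~\ref{lem-restrictiontosectionalboundary} exhibits it as $\varphi_{\widetilde{\cA}*}\Delta_{\widetilde{\cA}}$ plus a non-negative combination of the $\psi$-class of the merged marking and sectional boundary divisors; the first term is nef by induction, and the correction pairs non-negatively with any curve in $D_{\{i,j\}}$, so again $L\cdot B\ge 0$.

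The remaining, and decisive, case is that $B$ is contained in no boundary divisor of $\Mza$. After normalizing $B$ and pulling back the universal family over $\Mza$, one gets a one-parameter family $f\colon (X,\sigma_{1},\dots,\sigma_{n}) \to \widetilde{B}$ of $\cA$-stable curves whose general fibre is a smooth rational curve with $n$ distinct marked points. Using the form $L = K_{\Mza} + \sum_{i}a_{i}\psi_{i} + \sum_{a_{i}+a_{j}\le 1}(a_{i}+a_{j})D_{\{i,j\}}$, the last sum meets $B$ non-negatively because each $D_{\{i,j\}}$ is effective and $B$ lies in none of them, while $(K_{\Mza}+\sum_{i}a_{i}\psi_{i})\cdot B\ge 0$ follows by applying Fedorchuk's positivity result (Proposition~\ref{prop-positivity}) to $f$---its hypotheses being met precisely because the general fibre is smooth. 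Thus $L\cdot B\ge 0$ in all cases and $L$ is nef.

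I expect the last case to be the crux: one has to rewrite $(K_{\Mza}+\sum_{i}a_{i}\psi_{i})\cdot B$ in exactly the shape controlled by Proposition~\ref{prop-positivity} and confirm that smoothness of the general fibre alone is enough to invoke it. The point is that $-2D_{\mathrm{nod}}$ enters $L$ with a negative coefficient, so no effectivity argument is available and the positivity input is genuinely needed here. A secondary, more bookkeeping-heavy matter is establishing the precise form of the correction term in the restriction of $L$ to a boundary of curves with coincident sections and checking that it pairs non-negatively with curves there (which uses nefness of $\psi$-classes on Hassett spaces and, where such a curve lies in a sectional boundary divisor, a further application of the same reasoning).
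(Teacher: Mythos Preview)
Your inductive framework and the treatment of curves lying in nodal boundary divisors match the paper exactly. The genuine gap is in your ``decisive case.'' Proposition~\ref{prop-positivity} asserts that $\omega_{\pi} + \sum_i a_i\sigma_i$ is nef \emph{on the surface} $S$; it does not by itself yield $(K_{\Mza}+\sum_i a_i\psi_i)\cdot B\ge 0$. To extract a non-negative number on $B$ you must intersect the nef class against something effective on $S$. The paper does this via the factorisation
\[
\varphi_{\cA*}(\Delta_\cA)\cdot B \;=\; \pi_*\Big(\big(\omega_\pi+\textstyle\sum_i a_i\sigma_i\big)\cdot\big(2\omega_\pi+\textstyle\sum_i\sigma_i\big)\Big),
\]
observing that the cross terms $\sum_{i<j}(a_i+a_j)\sigma_i\sigma_j$ in the expansion recover exactly the sectional-boundary contribution $\sum(a_i+a_j)D_{\{i,j\}}$. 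The second factor $2\omega_\pi+\sum_i\sigma_i$ is then shown to be effective by a separate argument (Lemma~\ref{lem-effectivity}), and this is the ingredient missing from your proposal. In particular there is no way to factor $K_{\Mza}+\sum_i a_i\psi_i$ alone as $(\omega_\pi+\sum a_i\sigma_i)\cdot(\text{effective})$: the natural factorisation forces you to keep the $D_{\{i,j\}}$ terms, so dropping them first and then invoking Proposition~\ref{prop-positivity} does not close.

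This also shows why your separate handling of curves in $D_{\{i,j\}}$ is unnecessary for nefness: once the factorisation and the effectivity lemma are in place, the argument applies uniformly to every $B$ whose general fibre is smooth, regardless of whether sections coincide generically. The paper therefore splits only into ``$B$ in a nodal boundary'' versus ``general fibre smooth,'' and never restricts to a sectional boundary stratum in the proof of Proposition~\ref{prop-nef}. Your restriction computation is essentially the one appearing later in the ampleness proof (Proposition~\ref{prop-ample}, equation~\eqref{eqn-replacementpullback}), but note that there the positivity of the correction term uses the specific hypothesis $\sigma_p^2<0$ on the curve in question, not a general nefness of $\psi$-classes on Hassett spaces; the latter would itself require justification.
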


The key ingredient is the following positivity result of Fedorchuk \cite{Fedorchuk}.
Fedorchuk gives an elementary and beautiful intersection theoretical proof of 
this result. As Fedorchuk mentioned in \cite{Fedorchuk},
it can be proved by using the semipositivity method of Koll\'ar in
\cite[Corollary 4.6, Proposition 4.7]{Kollar}.

\begin{proposition}\cite[Proposition 2.1]{Fedorchuk}\label{prop-positivity}
Let $\pi : S \to B$ be a generically smooth family of nodal curves of arithmetic 
genus $g$, with $n$ sections $\sigma_{1}, \cdots, \sigma_{n}$
over a smooth complete curve $B$. 
For a weight datum $\cA = (a_{1}, \cdots, a_{n})$, suppose that 
\[
	L := \omega_{\pi}+\sum_{i=1}^{n} a_{i}\sigma_{i}
\]
is $\pi$-nef. Suppose further that $\sigma_{i_{1}}, \cdots, \sigma_{i_{k}}$
can coincide only if $\sum_{j} \sigma_{i_{j}} \le 1$. 
Then $L$ is nef on $S$.
\end{proposition}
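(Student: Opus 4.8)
The plan is to verify the nefness of $L$ by intersecting it with every irreducible curve on the surface $S$, separating these curves into those contained in a fiber of $\pi$ and those dominating $B$. If $\Gamma$ lies in a fiber, then $L\cdot\Gamma$ is computed inside that fiber, so $L\cdot\Gamma\ge 0$ is literally the $\pi$-nef hypothesis and nothing further is needed. All the content is therefore carried by the horizontal curves, those with $\pi(\Gamma)=B$. For such a $\Gamma$ I would pass to the normalization $\tilde\Gamma\to\Gamma\to B$ and base-change the family along $\tilde\Gamma\to B$: the pulled-back family is again a generically smooth family of nodal curves, $\Gamma$ lifts to a genuine section $\tau$, and since the relative dualizing sheaf and the sections are compatible with base change, $L\cdot\Gamma$ becomes $L'\cdot\tau$ for the base-changed divisor $L'$. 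Base change can introduce $A_{k}$ singularities over the nodes, so I would take the minimal resolution; the exceptional $\pi$-vertical curves are orthogonal to the pullback of $L'$ and are met nonnegatively by the strict transform, so they do not affect the estimate. This reduces the whole problem to showing $L\cdot\tau\ge 0$ for a single section $\tau$ of a smooth fibered surface.

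For the section I would use adjunction. Since $\tau$ is a section of a smooth family, its normal bundle is $\tau^{*}\omega_{\pi}^{\vee}$, whence $\omega_{\pi}\cdot\tau=-\tau^{2}$, and therefore $L\cdot\tau=-\tau^{2}+\sum_{i}a_{i}(\sigma_{i}\cdot\tau)$. The desired inequality becomes $\sum_{i}a_{i}(\sigma_{i}\cdot\tau)\ge\tau^{2}$. The mechanism that makes this hold is that a large self-intersection $\tau^{2}$ forces $\tau$ to wind around the fibers, and winding forces incidences with the marked sections. The $\pi$-nef hypothesis says that on each component $C$ of each fiber one has $\deg(L|_{C})\ge 0$; summing over a fiber $F$ gives $\deg\bigl(L|_{F}\bigr)=2g-2+\sum_{i}a_{i}\ge 0$, and this is exactly the budget needed to dominate $\tau^{2}$. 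I would make this quantitative by computing $L\cdot\tau$ fiber by fiber, bounding the contribution of each fiber to $\tau^{2}$ by the incidences of $\tau$ with the sections and the nodes there, and using the $\pi$-nef degree bound on the component of that fiber carrying $\tau$.

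The hard part will be the section case when $\tau$ coincides with one of the $\sigma_{i}$, or when several marked sections pass through the point $\tau(b)$ in some fiber: then $\sigma_{i}\cdot\tau=\sigma_{i}^{2}$ can be negative and the naive bound $\sigma_{i}\cdot\tau\ge 0$ fails, so the estimate above is no longer automatic. This is precisely where the coincidence hypothesis enters: sections may meet only when the sum of their weights is at most $1$, which guarantees that the boundary $\QQ$-divisor $\Delta=\sum_{i}a_{i}\sigma_{i}$ has multiplicity at most $1$ at every point of $S$, i.e. the pair $(S,\Delta)$ is log canonical along the sections and its fiberwise restriction is a weighted-stable pointed curve. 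Writing $L=\omega_{\pi}+\Delta=K_{S/B}+\Delta$, I would feed this log-canonical multiplicity bound into Koll\'ar's semipositivity theorem (\cite[Corollary 4.6, Proposition 4.7]{Kollar}) for the log pair, obtaining that the pushforwards $\pi_{*}\bigl(\cO(mL)\bigr)$ are nef on $B$ for $m$ large and divisible; combined with $\pi$-nefness this promotes $L$ from relatively nef to nef on the total space and disposes of the remaining coincident cases uniformly. The condition $\sum_{j}a_{i_{j}}\le 1$ is thus exactly the log-canonical threshold needed to invoke semipositivity, and identifying and verifying this singularity bound under the base change is the crux of the argument.
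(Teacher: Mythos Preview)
The paper does not give its own proof of this proposition; it only cites \cite[Proposition~2.1]{Fedorchuk} and remarks that there are two routes---Fedorchuk's ``elementary and beautiful intersection theoretical proof'' and Koll\'ar's semipositivity method \cite[Corollary~4.6, Proposition~4.7]{Kollar}. So there is nothing in the paper to compare your argument against beyond these two pointers.

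Your outline is compatible with both routes. The reduction---vertical curves handled by $\pi$-nefness, horizontal curves reduced by base change and minimal resolution of the resulting $A_k$ singularities to a genuine section $\tau$---is standard and correct. Your final paragraph, reading the coincidence hypothesis $\sum_j a_{i_j}\le 1$ as exactly the log-canonical bound on the multiplicity of $\Delta=\sum_i a_i\sigma_i$ and then invoking Koll\'ar's semipositivity to upgrade $\pi$-nefness to global nefness, is precisely the second approach the paper cites and is a legitimate argument.

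The middle paragraph, however, is not a proof as written. A section meets each fiber exactly once, so the phrase ``a large $\tau^2$ forces $\tau$ to wind around the fibers'' has no content once you have already reduced to a section, and $\tau^2$ is a global number on the surface with no fibrewise decomposition, so ``computing $L\cdot\tau$ fiber by fiber'' cannot control it. Fedorchuk's actual intersection-theoretic argument is different in kind: one adjoins $\tau$ as an additional weighted section and proves a global inequality of the type $L^2\ge 0$ (respectively $L\cdot L'\ge 0$) directly, rather than attempting any local bound on $\tau^2$. If you want to avoid Koll\'ar and carry through the first route, that paragraph would have to be replaced entirely; as it stands, your proof rests on the Koll\'ar invocation in the last paragraph, which is fine and matches one of the two approaches the paper names.
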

If $\pi : S \to B$ is a generically smooth family of $\cA$-stable curves,
or more generally $\cA$-semi-stable curves (allowing irreducible components 
with $2$ nodes and no marked points),
then the assumptions of Proposition \ref{prop-positivity} is satisfied by 
the definition of $\cA$-stablility.

We need an effectivity result first.
\begin{lemma}\label{lem-effectivity}
Let $\pi : S \to B$ be a family of $\cA$-stable curves with $n$ sections 
$\sigma_{1}, \cdots, \sigma_{n}$ over a smooth complete curve $B$.
Then $2\omega_{\pi} + \sum_{i=1}^{n} \sigma_{i}$ is effective.
\end{lemma}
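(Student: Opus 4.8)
The plan is to reduce the claim to a statement about a single fiber together with a contribution along $B$, using the fact that $2\omega_\pi + \sum \sigma_i$ is $\pi$-nef on any $\cA$-stable family. First I would note that, since $0 < a_i \le 1$, we have $2\omega_\pi + \sum_{i=1}^n \sigma_i = \omega_\pi + (\omega_\pi + \sum a_i \sigma_i) + \sum (1-a_i)\sigma_i$; the middle term is $\pi$-ample by the definition of $\cA$-stability, $\omega_\pi$ restricted to a fiber has degree $2g-2$, and the last term is effective. Restricting to a general (smooth) fiber $C$, the divisor $2\omega_C + \sum s_i$ has degree $2(2g-2)+n$, which is $\ge 0$ once $n \ge 4-4g$; in genus $0$ this needs $n \ge 4$, exactly the standing hypothesis, and in higher genus it is automatic. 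So the restriction to a general fiber is a nonnegative divisor on $\PP^1$ (genus $0$) or on a curve of genus $g$, hence effective or at worst we need $h^0 > 0$.

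The main step is to pass from positivity on fibers to effectivity on the total space $S$. I would argue via $\pi_*$: set $\mathcal{F} = \pi_*(2\omega_\pi + \sum_{i=1}^n \sigma_i)$, a coherent sheaf on $B$. Because the line bundle is $\pi$-nef and has nonnegative degree on every fiber, and because $B$ is a curve, I expect one can show $\mathcal{F}$ is a vector bundle of positive rank whose degree is controlled by a Grothendieck--Riemann--Roch / Mumford-type computation: $\deg \mathcal{F} = \pi_*\big((2\omega_\pi + \sum\sigma_i)\cdot(\text{something})\big) + \chi$-corrections. In the genus-zero case that we ultimately care about, the cleaner route is to write $2\omega_\pi + \sum \sigma_i$ explicitly in terms of the classes $\omega_\pi$, $\sigma_i$, and to use that on each fiber of an $\cA$-stable genus-$0$ curve the sheaf $\pi_*$ is globally generated in the relative sense, so that $h^0(S, 2\omega_\pi + \sum\sigma_i) \ge h^0(B, \mathcal{F})$ and $\mathcal{F}$ has a section because it is a nonzero bundle on a curve with nonnegative degree on fibers; then a section of $\mathcal{F}$ pulls back to a section of the bundle on $S$, giving an effective divisor in the linear system.

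Alternatively, and perhaps more robustly, I would invoke Proposition \ref{prop-positivity} to get nefness of a related divisor and combine with an Euler characteristic computation: since $2\omega_\pi + \sum \sigma_i = \omega_\pi + L + \sum(1-a_i)\sigma_i$ with $L = \omega_\pi + \sum a_i\sigma_i$ nef on $S$ by the remark following Proposition \ref{prop-positivity}, and $\sum(1-a_i)\sigma_i$ visibly effective, it suffices to show $\omega_\pi + L$ is effective. On fibers $\omega_\pi + L$ has degree $(2g-2) + (2g-2+\sum a_i) = 2(2g-2) + \sum a_i$, which is positive for $g \ge 1$; for $g = 0$ it equals $\sum a_i - 4 \ge -2$, so this route needs the extra $\sum \sigma_i$ after all, confirming that the genus-zero case is the delicate one and that one really must use $n \ge 4$ and the structure of $\cA$-stable rational curves (each component has at least three special points, so marked points are abundant) rather than a numerical bound alone.

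The hard part will be the passage from fiberwise nonnegativity to an actual global section, i.e.\ controlling $R^1\pi_*$ and the possibility that special fibers (reducible $\cA$-stable curves) behave worse than the generic fiber; I expect this is handled by writing $2\omega_\pi + \sum\sigma_i$ as ($\pi$-nef) $+$ (effective) as above and checking on each component of each fiber that the relevant restricted degree is nonnegative using that an $\cA$-stable component carries either $\ge 3$ nodes, or $\ge 2$ nodes plus a marked point, or $\ge 1$ node plus enough marked weight — so that $2\omega + \sum\sigma_i$ restricted to that component, which is $2(\#\text{nodes} - 2) + \#\text{marked points on it}$ for a rational component, is always $\ge 0$. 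Once that component-by-component check is in hand, $\pi$-nefness and the fact that $B$ is a smooth complete curve give that $\pi_*$ of the bundle is a nonzero vector bundle of nonnegative degree, hence has a nonzero section, proving effectivity.
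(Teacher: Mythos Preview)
Your proposal has a genuine gap at exactly the point you flag as ``the hard part'': passing from fiberwise nonnegativity (equivalently, $\pi$-nefness of $2\omega_\pi+\sum\sigma_i$) to the existence of a global section on $S$. The component-by-component check you sketch is correct and shows $2\omega_\pi+\sum\sigma_i$ is $\pi$-nef, but that alone does not force $\pi_*(2\omega_\pi+\sum\sigma_i)$ to have a section: a nonzero vector bundle on a curve need not admit any global sections, and you never establish that its degree is nonnegative or that $R^1\pi_*$ vanishes. Your sentence ``$\mathcal{F}$ has a section because it is a nonzero bundle on a curve with nonnegative degree on fibers'' conflates degree on fibers of $\pi$ (which you have) with degree of $\mathcal{F}$ on $B$ (which you do not), and even the latter would not suffice without further input. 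The decomposition $2\omega_\pi+\sum\sigma_i = \omega_\pi + L + \sum(1-a_i)\sigma_i$ with $L$ nef is fine, but reduces you to effectivity of $\omega_\pi+L$, which is no easier.

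The paper takes a completely different route that avoids pushforwards and cohomology. It argues by induction on $n$; after passing to a minimal resolution (the $A_k$ singularities are Du Val, so this is crepant), it first disposes of the case where some sections coincide identically by replacing them with a single section and invoking the inductive hypothesis. When all sections are distinct, it performs a birational modification: blow up the finitely many points where sections meet to obtain a $(1,\dots,1)$-semi-stable family $S_1$, then blow down the $(-1)$-curves carrying exactly two sections to obtain a $(\tfrac12,\dots,\tfrac12)$-semi-stable family $S_2$. On $S_2$ one has $2\omega_{\pi_2}+\sum\sigma_i^2 = 2\bigl(\omega_{\pi_2}+\tfrac12\sum\sigma_i^2\bigr)$, and Fedorchuk's positivity result (Proposition~\ref{prop-positivity}) applied with weights $\tfrac12$ makes this \emph{nef on the total surface} $S_2$, not merely $\pi$-nef; from nefness one gets effectivity. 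A blow-up computation shows this divisor pulls back exactly to $2\omega_{\pi_1}+\sum\sigma_i^1$ on $S_1$, hence that is effective, and pushing forward along the blow-down $\rho_1:S_1\to S$ preserves effectivity. The key idea you are missing is this birational passage to the $(\tfrac12,\dots,\tfrac12)$ model, which upgrades $\pi$-nefness to honest nefness and thereby sidesteps the pushforward analysis entirely.
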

\begin{proof}
We will use induction on $n$. For $n=4$ case, the result is a direct computation.

By \cite[118p]{HarrisMorrison}, $S$ has at worst $A_{k}$ singularities only.
An $A_{k}$ singularity is Du Val, so if $\rho : \tilde{S} \to S$ is a minimal 
resolution, then $\omega_{\pi \circ \rho} = \rho^{*}(\omega_{\pi})$ and 
$\rho_{*}(\omega_{\pi \circ \rho}) = \omega_{\rho}$. Thus 
we may assume that $S$ is smooth.

Suppose that for $J \subset [n]$ with $|J| \ge 2$, $\sigma_{i} = \sigma_{j}$
for all $i, j \in J$.
We may assume that $J = \{1,2,\cdots, m\}$ for some $m \le n$.
Then by pull-back along $\chi_{J} : \overline{M}_{0,\cA'} \to \Mza$ 
(see Section \ref{sec-divisors}), 
we may assume that $(\pi : S \to B, \sigma_{1}, \cdots, \sigma_{n})$ is 
a family of  $\cA'$-stable curves 
$(\pi : S \to B, \sigma_{m}, \sigma_{m+1}, \cdots, \sigma_{n})$
with $|J| - 1$ additional sections 
$\sigma_{1}, \sigma_{2}, \cdots, \sigma_{m-1}$. 
By the induction hypothesis, $2\omega_{\pi} + \sum_{i = m}^{n}\sigma_{i}$ 
is effective.
So $2\omega_{\pi}+\sum_{i=1}^{n}\sigma_{i}
= (2\omega_{\pi}+\sum_{i=m}^{n}\sigma_{i})
+\sum_{i=1}^{m-1}\sigma_{i}$
is effective, too.
Thus we may assume that all sections are distinct.

After taking several blow-ups along points with two or more sections meet,
we get a family of $(1,1, \cdots, 1)$-semi-stable curves 
$(\pi_{1} : S_{1} \to B, \sigma_{1}^{1}, \cdots, \sigma_{n}^{1}$).
Let $\rho_{1} : S_{1} \to S$ be the blow-up.
If there exist $(-1)$ curves with exactly $2$ sections, 
after contracting these $(-1)$ curves by blowing-down, we get a family 
$(\pi_{2} : S_{2} \to B, \sigma_{1}^{2}, \cdots, \sigma_{n}^{2})$
of $(1/2, \cdots, 1/2)$-semi-stable curves.
Let $\rho_{2} : S_{1} \to S_{2}$ be the blow-down morphism.
Over $S_{2}$, $2\omega_{\pi_{2}} + \sum_{i=1}^{n}\sigma_{i}^{2}$ 
is nef by Proposition \ref{prop-positivity} and thus effective.

\begin{equation}\label{eqn-blowupdiagram}
	\xymatrix{&S_{1}\ar[ld]_{\rho_{1}}\ar[rd]^{\rho_{2}}
	\ar[dd]^{\pi_{1}}&\\
	S\ar[rd]_{\pi}&& S_{2}\ar[ld]^{\pi_{2}}\\
	& B}
\end{equation}

From $(1/2, \cdots, 1/2)$-stability, we know that 
for each point in $S_{2}$, at most two sections meet at that point.
Let $x_{1}, \cdots, x_{k}$ be points with coincident sections.
Then $\rho_{2}$ is the blow-up along $x_{1}, \cdots, x_{k}$.
Let $E_{1}, \cdots, E_{k}$ be the exceptional divisors.
By the blow-up formula, 
$\omega_{\pi_{1}} = \rho_{2}^{*}(\omega_{\pi_{2}})
+ \sum_{j=1}^{k}E_{j}$.
Also $\sum_{i=1}^{n} \sigma_{i}^{1} = \rho_{2}^{*}(\sum_{i=1}^{n}
\sigma_{i}^{2})-2\sum_{j=1}^{k}E_{j}$.
Thus
\begin{equation}
	2\omega_{\pi_{1}}+\sum_{i=1}^{n}\sigma_{i}^{1}
	= \rho_{2}^{*}(2\omega_{\pi_{2}}+\sum_{i=1}^{n}\sigma_{i}^{2}),
\end{equation}
so $2\omega_{\pi_{1}}+\sum_{i=1}^{n}\sigma_{i}^{1}$ is effective.

Finally, $\rho_{1 *}(\omega_{\pi_{1}}) = \omega_{\pi}$ and 
$\rho_{1 *}(\sigma_{i}^{1}) = \sigma_{i}$ since $\rho_{1}$ is a composition
of point blow-ups. 
Thus $2 \omega_{\pi}+\sum_{i=1}^{n}\sigma_{i} = 
\rho_{1 *}(2\omega_{\pi_{1}}+\sum_{i=1}^{n}\sigma_{i}^{1})$ is 
a push-forward of an effective divisor. Hence it is effective, too.
\end{proof}

\begin{proof}[Proof of Proposition \ref{prop-nef}]
For $n = 4$ case, since $\Mza \cong \Mzn \cong \PP^{1}$, 
the result is a consequence of a simple direct computation.
So we can use the induction on the dimension $n$.

To prove the nefness of $\varphi_{\cA *}(\Delta_{\cA})$, 
it suffices to show that for every complete irreducible curve $B \to \Mza$, 
the restriction of $\varphi_{\cA *}(\Delta_{\cA})|_{B}$ has 
nonnegative degree.
By composing the normalization $B^{\nu} \to B$, 
we may assume that $B$ is smooth.

By equation \eqref{eqn-pushforward} and $\kappa = - D_{\mathrm{nod}}$
(see the proof of Lemma \ref{lem-canonicaldivisorMza}),
it is straightforward to check that 
\begin{align}
	\varphi_{\cA *}(\Delta_{\cA}) &=
	2\kappa + \sum_{i=1}^{n}(1+a_{i})\psi_{i}
	+ \sum_{\substack{i<j\\a_{i}+a_{j}\le 1}}(a_{i}+a_{j})D_{\{i,j\}}
	\label{eqn-pushforwardDelta}\\
	&= \pi_{*}\Big(
	2\omega^{2}+\sum_{i=1}^{n}(1+a_{i})(\omega \cdot \sigma_{i})
	+\sum_{\substack{i < j\\a_{i}+a_{j}\le 1}}(a_{i}+a_{j})
	(\sigma_{i}\cdot \sigma_{j})\Big)
	\label{eqn-pushforwardDelta2}.
\end{align}
For a boundary divisor $D_{I}$ of nodal curves, 
let $\eta_{I} :\overline{M}_{0,\cA_{I}} \times 
\overline{M}_{0,\cA_{I^{c}}} \to D_{I}
\hookrightarrow \Mza$ be the inclusion of boundary.
We will use the same notation in Section \ref{sec-divisors}.
By Lemma \ref{lem-restrictiontoboundary} and \eqref{eqn-pushforwardDelta}, 
it is straightforward to check 
\begin{equation}\label{eqn-pullbackboundary}
	\eta_{I}^{*}(\varphi_{\cA *}(\Delta_{\cA})) 
	= \pi_{1}^{*}(\varphi_{\cA_{I} *}(\Delta_{\cA_{I}}))
	+ \pi_{2}^{*}(\varphi_{\cA_{I^{c}} *}(\Delta_{\cA_{I^{c}}})).
\end{equation}
Thus for a curve $B$ supported on a boundary, the degree of 
$\varphi_{\cA *}(\Delta_{\cA})$ is non-negative by induction.
Therefore it suffices to check for a family $S \to B$ whose general fiber 
is a nonsingular curve.

Note that $\omega \cdot \sigma_{i} = -\sigma_{i}^{2}$ by adjunction formula
and $\sigma_{i}\cdot \sigma_{j} = 0$ if $a_{i}+a_{j} > 1$.
Therefore 
\begin{equation}\label{eqn-neftimeseffective}
\begin{split}
	&\;
	2\omega^{2}+\sum_{i=1}^{n}(1+a_{i})(\omega \cdot \sigma_{i})
	+\sum_{\substack{i < j\\a_{i}+a_{j}\le 1}}(a_{i}+a_{j})
	(\sigma_{i}\cdot\sigma_{j})\\
	=&\; 2\omega^{2}+\sum_{i=1}^{n} (\omega \cdot \sigma_{i}) 
	+ \sum_{i=1}^{n} 2 a_{i}(\omega \cdot \sigma_{i})
	+\sum_{i=1}^{n} a_{i}\sigma_{i}^{2}+\sum_{i < j}(a_{i}+a_{j})
	(\sigma_{i}\cdot	\sigma_{j})\\
	=&\; (\omega+\sum_{i=1}^{n} a_{i}\sigma_{i})
	\cdot(2\omega + \sum_{i=1}^{n} \sigma_{i}).
\end{split}
\end{equation}
Hence it suffices to check that 
$\deg \pi_{*}\big((\omega+\sum_{i=1}^{n} a_{i}\sigma_{i})
\cdot(2\omega + \sum_{i=1}^{n} \sigma_{i})\big)|_{B} \ge 0$.
By Proposition \ref{prop-positivity}, 
$\omega+\sum_{i=1}^{n} a_{i}\sigma_{i}$ is nef on $S$.
By Lemma \ref{lem-effectivity},
$2\omega + \sum_{i=1}^{n} \sigma_{i}$ is effective on $S$.
Thus the intersection is non-negative and the result follows.
\end{proof}

Next, we prove the ampleness of $\varphi_{\cA *}(\Delta_{\cA})$.
This is an application of the perturbation technique of 
Fedorchuk and Smyth introduced in \cite{FedorchukSmyth}.
\begin{proposition}\label{prop-ample}
Within the same assumption of Proposition \ref{prop-nef}, 
$\varphi_{\cA *}(\Delta_{\cA})$ is an ample divisor on $\Mza$.
\end{proposition}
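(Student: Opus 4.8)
The plan is to upgrade the nef-ness of $L:=\varphi_{\cA *}(\Delta_{\cA})$ proved in Proposition \ref{prop-nef} to ampleness by showing that $L$ lies in the \emph{interior} of $Nef(\Mza)$ and applying Kleiman's criterion. Since $\varphi_{\cA}$ is a composition of smooth blow-downs, $\varphi_{\cA *} : N^{1}(\Mzn)\to N^{1}(\Mza)$ is surjective (one has $\varphi_{\cA *}\varphi_{\cA}^{*}=\mathrm{id}$); combined with the fact from \cite{Keel} that the classes $D_{I}$ span $N^{1}(\Mzn)$ and with Lemma \ref{lem-pushforwardpullback}(1), this shows that the boundary divisor classes of $\Mza$ — the nodal boundaries and the coincident-section boundaries — span $N^{1}(\Mza)$. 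Hence $Nef(\Mza)$ is full-dimensional, and to get $L\in\mathrm{int}\,Nef(\Mza)$ it suffices to exhibit one $\varepsilon>0$ for which $L\pm\varepsilon D$ is nef for every boundary divisor $D$ of $\Mza$: the convex hull of these finitely many classes is then a full-dimensional polytope inside $Nef(\Mza)$ containing $L$ in its interior. I would argue by induction on $n$; the base case $n=4$ is immediate since $\Mza\cong\PP^{1}$ and $L$ has positive degree $\sum_{i=1}^{n}a_{i}-2>0$.

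For the inductive step I would revisit the two cases of curves $B\to\Mza$ from the proof of Proposition \ref{prop-nef}, now for the perturbed class $L':=L\pm\varepsilon D$. If $B$ lies in a nodal boundary $D_{I}\cong\overline{M}_{0,\cA_{I}}\times\overline{M}_{0,\cA_{I^{c}}}$, then by \eqref{eqn-pullbackboundary} and Lemmas \ref{lem-restrictiontoboundary} and \ref{lem-restrictiontosectionalboundary} the restriction $\eta_{I}^{*}(L)=\pi_{1}^{*}\big(\varphi_{\cA_{I}*}(\Delta_{\cA_{I}})\big)+\pi_{2}^{*}\big(\varphi_{\cA_{I^{c}}*}(\Delta_{\cA_{I^{c}}})\big)$ is, by the induction hypothesis, a sum of pullbacks of two ample classes from the factors, hence ample on $D_{I}$; moreover $\eta_{I}^{*}(D)$ is a sum of pullbacks of fixed classes from the factors, so $\eta_{I}^{*}(L')$ is an ample class perturbed by a small multiple of a fixed class, which remains ample once $\varepsilon$ is small. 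If instead the general fiber of $B$ is a smooth rational curve, I would use the identity \eqref{eqn-neftimeseffective}, which writes the relevant pushforward as $\pi_{*}\big((\omega+\sum_{i=1}^{n}a_{i}\sigma_{i})\cdot(2\omega+\sum_{i=1}^{n}\sigma_{i})\big)$ with the first factor $\pi$-nef (Proposition \ref{prop-positivity}) and the second effective (Lemma \ref{lem-effectivity}), and show that the correction $\pm\varepsilon D$ can be absorbed while keeping this shape: slightly lowering the relevant weights preserves $\pi$-nef-ness of the first factor, while a boundary correction $\pm\varepsilon\,\pi_{*}(\sigma_{i}\sigma_{j})$ or $\pm\varepsilon D_{I}$ is matched against the effective second factor or handled by the restriction step above. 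In each case $L'$ is a pushforward of a $\pi$-nef divisor times an effective one, so $L'\cdot B\ge 0$.

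The step I expect to be the main obstacle is making this perturbation uniform, especially when $\cA$ lies on a wall of the chamber decomposition of the space of weight data. In the interior of a chamber one may simply lower some $a_{i}$ without altering $\Mza$, so that $L-\varepsilon(\text{boundary})$ becomes literally $\varphi_{\cB *}(\Delta_{\cB})$ for a nearby weight $\cB$ with $\overline{M}_{0,\cB}=\Mza$, which is nef by Proposition \ref{prop-nef} applied to $\cB$; on a wall this is unavailable, since lowering a weight contracts a divisor of $\Mza$. One must then perturb only by boundary classes that persist on $\Mza$ and check, case by case over the finitely many boundary types — nodal $D_{I}$, and coincident-section $D_{\{i,j\}}$ with $a_{i}+a_{j}$ equal to or strictly less than $1$ — that the nef-times-effective presentation can be re-established, e.g.\ by redistributing weight among the $\psi_{i}$ and the $D_{\{i,j\}}$ or by applying Lemma \ref{lem-effectivity} to an auxiliary family; one also needs a single $\varepsilon$ valid simultaneously for all boundary divisors and all nodal boundaries arising in the inductive restriction. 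Since the geometric inputs — Proposition \ref{prop-positivity}, Lemma \ref{lem-effectivity}, and the restriction formulas — are already in hand, this bookkeeping is essentially all that remains.
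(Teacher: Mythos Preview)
Your overall strategy—show $L=\varphi_{\cA *}(\Delta_{\cA})$ lies in the interior of $Nef(\Mza)$ via induction on $n$ and Kleiman—matches the paper's. But your two-case split for the inductive step has a genuine gap. Inside your case ``general fiber of $B$ smooth'' there is a subcase you do not handle: $B$ lies in the image of a replacement morphism $\chi_{J}:\overline{M}_{0,\cA'}\to\Mza$ (i.e.\ $\sigma_{i}=\sigma_{j}$ identically for all $i,j\in J$) with $\sigma_{i}^{2}<0$ on $B$. For such $B$ and $i,j\in J$ one has $D_{\{i,j\}}\cdot B=\pi_{*}(\sigma_{i}\sigma_{j})\cdot B<0$ (equivalently $\chi_{J}^{*}D_{\{i,j\}}=-\psi_{p}$ by Lemma~\ref{lem-restrictiontosectionalboundary}(3)), so the correction $+\varepsilon D_{\{i,j\}}$ cannot be ``matched against the effective second factor'': the nef-times-effective presentation \eqref{eqn-neftimeseffective} only gives $L\cdot B\ge 0$, which is not enough to absorb a strictly negative perturbation. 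The paper treats this as a separate case and computes
\[
\chi_{J}^{*}(L)=\varphi_{\cA' *}(\Delta_{\cA'})+(|J|-1)\Big((1-\textstyle\sum_{i\in J}a_{i})\psi_{p}+\sum_{j\in J^{c}}a_{j}D_{\{p,j\}}\Big),
\]
so that the first summand is ample by induction and the second is nonnegative on $B$ (since $\psi_{p}\cdot B>0$ and, by maximality of $J$, $D_{\{p,j\}}\cdot B\ge 0$). Without this computation you cannot get a uniform positive lower bound for $L\cdot B$ on these curves, and your perturbation argument breaks.

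Separately, your worry about walls is misplaced, and your suggested fix (``lower some $a_{i}$ so that $L-\varepsilon D$ becomes literally $\varphi_{\cB *}(\Delta_{\cB})$'') does not work even off the wall: lowering a single $a_{i}$ changes $L$ by a combination of $\psi_{i}$ and several $D_{\{i,k\}}$, not by a single boundary class. The paper instead lowers \emph{all} weights by the same $\delta>0$; since this can only move each $w_{I}$ downward, for $\delta$ small one stays in the same chamber regardless of whether $\cA$ was on a wall, and then $L=\varphi_{\cA *}(\Delta_{\cA-\delta})+\delta\,\varphi_{\cA *}(\psi)$ with the first term nef by Proposition~\ref{prop-nef} and the second a strictly positive combination of all boundary classes. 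This absorbs any small perturbation $P$—but only on curves $B$ meeting every boundary nonnegatively, i.e.\ precisely those \emph{not} in the $\chi_{J}$-strata above. That is why the third case is needed.
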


\begin{proof}
We will prove that the following statement:
For $\Mza$, there exists $\epsilon_{\cA} > 0$ such that 
$\varphi_{\cA *}(\Delta_{\cA})\cdot B \ge \epsilon_{\cA}$
for every irreducible curve class $B$.
This implies that $\varphi_{\cA *}(\Delta_{\cA})$ lies on the 
interior of $Nef(\Mza)$, so by Kleiman's criterion, 
$\varphi_{\cA *}(\Delta_{\cA})$ is ample.

We will use the induction on $n$.
When $n=4$, then $\Mza \cong \PP^{1}$ and the result is straightforward.

Let $B$ be an integral complete curve on $\Mza$.
Since we only consider the intersection numbers only,
we may assume $B$ is nonsingular by applying normalization.
We will divide into three cases:

\medskip
(1) $B$ is in a component of nodal boundary.

\smallskip
By (\ref{eqn-pullbackboundary}) and the induction hypothesis, when we restrict 
$\varphi_{\cA *}(\Delta_{\cA})$ to a component of boundary of nodal curves, 
the restriction is ample and there is a lower bound of intersection numbers.

\medskip
(2) A general point of $B$ parameterizes smooth curve and 
there exist $J \subset [n]$ with $|J| \ge 2$ such that 
$\sigma_{i} = \sigma_{j}$ for all $i,j \in J$ and $\sigma_{i}^{2} < 0$.

\smallskip
We may assume that $J$ is maximal among such subsets.
In this case, $B$ is contained in the image of 
$\chi_{J} : \overline{M}_{0, \cA'} \to \Mza$ defined in 
section \ref{sec-divisors}.
Let $p$ be the unique index of $\cA'$ replacing indices in $J$.
Then by \eqref{eqn-pushforward} and Lemma 
\ref{lem-restrictiontosectionalboundary},
\begin{equation}\label{eqn-replacementpullback}
\begin{split}
	&\;\chi_{J}^{*}(\varphi_{\cA *}(\Delta_{\cA}))
	= \chi_{J}^{*}\Big(-2 D_{\mathrm{nod}} + 
	\sum_{i=1}^{n}(1+a_{i})\psi_{i} + 
	\sum_{\substack{i < j\\ a_{i}+a_{j}\le 1}}(a_{i}+a_{j})D_{\{i,j\}}\Big)\\
	&=\;
	-2 D_{\mathrm{nod}} + \sum_{i \in J^{c}}(1+a_{i})\psi_{i}
	+ \sum_{i \in J}(1+a_{i})\psi_{p}
	+ \sum_{i<j,\;i, j \in J^{c}}(a_{i}+a_{j})D_{\{i,j\}}\\
	&\;+ \sum_{i \in J,\; j \in J^{c}}(a_{i}+a_{j})D_{\{p,j\}}
	- \sum_{i<j,\;i,j \in J}(a_{i}+a_{j}) \psi_{p}\\
	&=\;
	-2 D_{\mathrm{nod}} + \sum_{i \in J^{c}}(1+a_{i})\psi_{i}
	+ (1+\sum_{j \in J}a_{j})\psi_{p} + (|J|-1)\psi_{p}
	+ \sum_{i<j,\;i,j \in J^{c}}(a_{i}+a_{j})D_{\{i,j\}}\\
	&+ \sum_{j \in J^{c}}\Big((\sum_{i \in J}a_{i})+a_{j}\Big)D_{\{p,j\}}
	+ (|J|-1)\sum_{j \in J^{c}}a_{j}D_{\{p,j\}}
	-(|J|-1)\Big(\sum_{i \in J}a_{i}\Big)\psi_{p}\\
	&=\; \varphi_{\cA' *}(\Delta_{\cA'})
	+(|J|-1)\Big((1-\sum_{i \in J}a_{i})\psi_{p}
	+\sum_{j \in J^{c}}a_{j}D_{\{p,j\}}\Big).
\end{split}
\end{equation}

By induction hypothesis, $\varphi_{\cA' *}(\Delta_{\cA'})\cdot B$ 
is bounded below by a positive number. 
Since $\sigma_{p}^{2} < 0$ by assumption, 
$\psi_{p} = -\sigma_{p}^{2} > 0$ on $B$.
The last summand is nonnegative since $D_{\{p,i\}}$ does not cover 
whole $B$ because of the maximality of $J$.
Hence there exists a positive lower bound of the intersection number 
$\varphi_{\cA *}(\Delta_{\cA})\cdot B$.

\begin{remark}\label{rmk-errorterm}
Indeed, the term $(1-\sum_{i \in J}a_{i})\psi_{p}+
\sum_{j \in J^{c}}a_{j}D_{\{p.j\}}$ 
in the last row of \eqref{eqn-replacementpullback} 
is $C_{p}$ in \cite[Theorem 1]{Fedorchuk}.
Fedorchuk proved that $C_{p}$ is nef on $\Mza$.
\end{remark}

(3) Otherwise.

\smallskip
In this case, a general point of $B$ parameterizes a smooth curve.
Note that there exists $\delta > 0$ such that 
every $\cA = (a_{1}, a_{2}, \cdots, a_{n})$-stable curve is also 
$\cA_{1} = (a_{1}-\delta, a_{2}-\delta, \cdots, 
a_{n}-\delta)$-stable too.
Therefore $\Mza = \overline{M}_{0, \cA_{1}}$
and $\varphi_{\cA} = \varphi_{\cA_{1}}$,
so $\varphi_{\cA *}(\Delta_{\cA_{1}})$ is nef by 
Proposition \ref{prop-nef}.
Thus
\begin{eqnarray*}
	\varphi_{\cA *}(\Delta_{\cA}) &=& 
	\varphi_{\cA *}(K_{\Mzn}+\sum_{i=1}^{n} a_{i}\psi_{i})\\
	&=& 
	\varphi_{\cA *}(K_{\Mzn}+\sum_{i=1}^{n} (a_{i}-\delta)\psi_{i})
	+ \delta \varphi_{\cA *}(\psi)
	= \varphi_{\cA *}(\Delta_{\cA_{1}}) + \delta\varphi_{\cA *}(\psi).
\end{eqnarray*}
On $\Mzn$, $\psi = \sum_{j=1}^{\lfloor n/2 \rfloor}
\frac{j(n-j)}{n-1}D_{j}$ by \cite[Lemma 1]{FarkasGibney}.
Thus $\varphi_{\cA *}(\Delta_{\cA})$ is a sum of a nef divisor 
$\varphi_{\cA *}(\Delta_{\cA_{1}})$ and an effective divisor 
$\delta \varphi_{\cA}(\psi) + P$ supported on the boundary.
Note that $\psi$ and $\varphi_{\cA *}(\psi)$ are \emph{positive} 
linear combinations of \emph{all} boundary components.
So if we take a divisor $P$ with small coefficients
with respect to boundary divisors, then
$\delta\varphi_{\cA *}(\psi) + P$ is an effective sum of boundary divisors.

We claim that $(\varphi_{\cA *}(\Delta_{\cA})+P) \cdot B$ is nonnegative.
Since $B$ does not lie on nodal boundary divisors,
$D_{I} \cdot B \ge 0$ for all boundary divisors of nodal curves.
Also, since $B$ has no coincident sections with negative self intersections, 
$D_{I} \cdot B \ge 0$ for all divisors of coincident sections, too.
Therefore $B$ intersects non-negatively 
with $\varphi_{\cA *}(\Delta_{\cA})+P$.
So for any metric $|| \cdot ||$ on $N^{1}(\Mza)$, 
there exists $\alpha > 0$ such that if $|| P || < \alpha$, then 
$(\varphi_{\cA *}(\Delta_{\cA})+P) \cdot B \ge 0$ for every irreducible 
curve $B$.
Note that $\alpha$ depends on only $\delta$, but not on $B$.
Thus there exists a positive lower bound of the intersection number 
$\varphi_{\cA *}(\Delta_{\cA})\cdot B$.

Note that there exists only \emph{finitely many} strata on $\Mza$.
So there exists only finitely many positive lower bounds of 
$\varphi_{\cA *}(\Delta_{\cA})\cdot B$ with respect to each stratum.
After taking the minimum, we get the global positive lower bound 
$\epsilon_{\cA}$ of intersection numbers.
\end{proof}

\begin{remark}\label{rmk-localglobal}
Theorem \ref{thm-mainthm} shows an unexpected duality.
Let $(C, x_{1}, x_{2}, \cdots, x_{n})$ be a stable pointed rational curve.
Then the log canonical model $C(\omega_{C}+\sum a_{i}x_{i})$ is 
an $\cA$-stable curve. More precisely, it is 
$\varphi_{\cA}(C, x_{1}, x_{2}, \cdots, x_{n})$. 
The same weight datum determines the log canonical model 
$\Mzn(K_{\Mzn}+\sum a_{i}\psi_{i})$.
\end{remark}

\begin{remark}\label{rmk-symmetriccase}
Suppose that the weight datum $\cA = (a_{1},\cdots, a_{n})$ is symmetric, i.e,
$a_{1} = \cdots = a_{n} = \alpha$ for some $2/n < \alpha \le 1$.
Then by \cite[Lemma 1]{FarkasGibney} and 
\cite[Proposition 2]{Pandharipande},
\begin{equation}
	\psi = \sum_{j=2}^{\lfloor n/2 \rfloor} \frac{j(n-j)}{n-1} D_{j}
	= K_{\Mzn}+2D.
\end{equation}
So for $\alpha > 0$,
\begin{equation}
	K_{\Mzn}+\alpha \psi = (1+\alpha)(K_{\Mzn}+
	\frac{2\alpha}{1+\alpha} D).
\end{equation}
Therefore the log canonical model of the pair $(\Mzn, K_{\Mzn}+\alpha \psi)$
is equal to the log canonical model of the pair $(\Mzn, K_{\Mzn}+
\frac{2\alpha}{1+\alpha}D)$.
If we substitute $\beta = \frac{2\alpha}{1+\alpha}$, then we 
get Theorem \ref{thm-symmetriccase}.
Hence it is a generalization of Simpson's theorem 
(Theorem \ref{thm-symmetriccase}).
\end{remark}

\begin{remark}\label{rmk-FarkasGibneyresult}
A divisor $\Delta$ on $\Mzn$ is called \emph{log canonical} if 
\begin{equation}\label{eqn-logcanonicaldivisor}
	\Delta \equiv 
	r(K_{\Mzn}+\sum_{i \subset [n], 2 \le |I| \le n-2} c_{I}D_{I})
\end{equation}
for some $r > 0$ and $0 \le c_{I} \le 1$
\cite[Definition 6.2]{AlexeevGibneySwinarski}.
For a log canonical divisor $\Delta$, it is nef if and only if $\Delta$ intersects 
with vital curves nonnegatively (\cite[Theorem 5]{FarkasGibney}).
So if $\varphi_{\cA}^{*}\varphi_{\cA *}(\Delta_{\cA})$ in 
\eqref{eqn-pushforwardpullback} is log canonical, then 
the proof in \cite{Moon1} is sufficient.
But for some weight data, $\varphi_{\cA}^{*}\varphi_{\cA *}(\Delta_{\cA})$
is not log canonical. For example, if $n=10$ and
$\cA = (1, 1, \epsilon, \epsilon, \cdots, \epsilon)$ for sufficiently 
small $\epsilon > 0$, then by using computer algebra system, 
we can check that $\varphi_{\cA}^{*}\varphi_{\cA *}(\Delta_{\cA})$
is not log canonical.
So for a complete proof, we need the argument in this paper.
\end{remark}



\bibliographystyle{alpha}

\end{document}